\def\ps@pprintTitle{%
 \let\@oddhead\@empty
 \let\@evenhead\@empty
 \def\@oddfoot{}%
 \let\@evenfoot\@oddfoot}
\newtheorem{theorem}{Theorem}
\newtheorem{lemma}[theorem]{Lemma}
\journal{Discrete Applied Mathematics}
\begin{document}

\begin{frontmatter}



\title{{Minimal obstructions to $2$-polar cographs\tnoteref{t1}}}

\tnotetext[t1]{This research was supported by a research grant from NSERC Canada
for the first and second author, and by Projects CAPES/Brazil 99999.000458/2015-05 and CNPq/Brazil 307252/2013-2 for the third author.}

\author[SFU]{Pavol Hell}
\ead{pavol@sfu.ca}
\author[UAZ]{C\'esar Hern\'andez-Cruz\corref{cor1}}
\ead{cesar@matem.unam.mx}
\author[UFC]{Cl\'audia Linhares Sales}
\ead{linhares@lia.ufc.br}

\address[SFU]{School of Computing Science\\
Simon Fraser University\\
Burnaby, B.C., Canada V5A 1S}
\address[UAZ]{Catedr\'atico CONACYT comisionado a la \\ Universidad Aut\'onoma de Zacatecas}
\address[UFC]{Departamento de Computa\c c\~ ao\\
Universidade Federal do Cear\'a\\
Fortaleza/CE, Brasil}

\cortext[cor1]{Corresponding author}

\begin{abstract}
A graph is a cograph if it is $P_4$-free.   A $k$-polar partition
of a graph $G$ is a partition of the set of vertices of $G$ into
parts $A$ and $B$ such that the subgraph induced by $A$ is
a complete multipartite graph with at most $k$ parts, and the
subgraph induced by $B$ is a disjoint union of at most $k$
cliques with no other edges.

It is known that $k$-polar cographs can be characterized by
a finite family of forbidden induced subgraphs, for any fixed
$k$.   A concrete family of such forbidden induced subgraphs
is known for $k=1$, since $1$-polar graphs are precisely split
graphs. For larger $k$ such families are not known, and Ekim,
Mahadev, and de Werra explicitely asked for the family for
$k=2$.   In this paper we provide such a family, and show that
the graphs can be obtained from four basic graphs by a
natural operation that preserves $2$-polarity and also
preserves the condition of being a cograph.   We do
not know such an operation for $k > 2$, nevertheless we
believe that the results and methods discussed here will also
be useful for higher $k$.
\end{abstract}

\begin{keyword}
Polar graph \sep cograph \sep forbidden sugraph characterization \sep $k$-polar graph \sep matrix partition \sep generalized colouring

\MSC 05C	69 \sep 05C70 \sep 05C75
\end{keyword}

\end{frontmatter}



\section{Introduction}

All graphs in this paper are considered to be finite and simple.  
We refer the reader to \cite{bondy2008} for basic terminology and
notation.  In particular, we use $P_k$ and $C_k$ to denote the path
and cycle on $k$ vertices, respectively.   A graph is a {\em cograph}
if it is $P_4$-free.

A {\em polar partition} of a graph $G$ is a partition of
the vertices of $G$ into parts $A$ and $B$ in such a
way that  the subgraph induced by $A$ is a complete
multipartite graph and the subgraph induced by $B$
is a disjoint union of cliques, with no other edges.  A
graph $G$ is {\em polar}, if it admits a polar partition,
and is $(s,k)${\em -polar} if it admits a polar partition
$(A,B)$ in which $A$ has at most $s$ parts and $B$
at most $k$ parts. In particular, when $s=k$, we use
the term $k${\em -polar partition} and $k${\em -polar
graph}. Note that $1$-polar graphs are precisely
split graphs.   It was shown by Foldes and Hammer
\cite{foldesSECGTC} that a graph is split if and only
if it does not contain $2K_2, C_4$ or $C_5$ as an
induced subgraph; as a consequence, testing
whether a given graph is split can be done in
polynomial time.

The concept of a matrix partition unifies many
interesting graph partition problems, including
$(s,k)$-partition. Given a symmetric $n \times n$
matrix $M$, with entries in $\{ 0, 1, \ast \}$, an
$M$-{\em partition} of a graph $G$ is a
partition\footnote{As it is usual in graph theory,
we do not require every part of the partition to
be non-empty.} $(V_1, \dots, V_n)$ of $V(G)$
such that, for every $i, j \in \{ 1, \dots, n \}$,
\begin{itemize}
	\item $V_i$ is completely adjacent to
		$V_j$ if $M_{ij} = 1$,
	
	\item $V_i$ is completely non-adjacent
		to $V_j$ if $M_{ij} = 0$,
	
	\item There are no restrictions if $M_{ij}
		= \ast$.
\end{itemize}
It follows from the definition that, in particular, if
$M_{ii} = 0$ ($M_{ii} = 1$), then $V_i$ is a stable
set ($V_i$ is a clique).   The $M$-{\em partition
problem} asks whether or not an input graph $G$
admits an $M$-partition.   It is easy to verify that,
e.g., the $k$-colouring and split partition problems
are matrix partition problems. See \cite{survey} for
a survey on the subject. It is also easy to see that an
$(s,k)$-partition of $G$ is a matrix partition in which
the matrix $M$ has $s+k$ rows and columns, the
principal submatrix induced by the first $s$ rows
is obtained from an identity matrix by exchanging
$0$'s and $1$'s, the principal submatrix induced
by the last $k$ rows is an identity matrix, and all
other entries are $\ast$.   Therefore, it follows
from \cite{federSIAMJDM16} (as explicitely
observed in \cite{ekimDAM156}), that for any
fixed $s$ and $k$, the class of $(s,k)$-polar
graphs can be recognized in polynomial time.

On the other hand, it was shown by Chernyak and
Chernyak \cite{chernyakDM62} that the recognition
of general polar graphs is $\mathcal{NP}$-complete.
Interestingly, the class of polar graphs that admit an
$(s,k)$-partition with $s=1$ or $k=1$ (sometimes
called {\em monopolar graphs}), is also
$\mathcal{NP}$-complete to recognize (as proved
by Farrugia \cite{farrugiaEJC11}). It was shown
recently that this remains true even in severely
restricted graph classes, for instance Le and
Nevries \cite{leTCS528} have shown that both
$\mathcal{NP}$-completeness results hold for
triangle-free planar graphs of maximum degree $3$.

Notice that having an $M$-partition is a hereditary
property, and hence, the family of $M$-partitionable
graphs admits a characterization in terms of forbidden
induced subgraphs.   A minimal $M${\em -obstruction}
is a graph which does not admit an $M$-partition, but
such that every proper induced subgraph does.
Feder, Hell and Hochst\"attler proved in \cite{feder2006}
that, for any matrix $M$, there are only finitely many
minimal $M$-obstructions which are cographs. (This
can also be derived from \cite{damaschkeJGT14}.)  In
other words, when we restrict the $M$-partition problem
to the class of cographs, there are only finitely minimal
$M$-obstructions, and, consequently, any $M$-partition
problem is solvable in polynomial time for cographs.

Thus, in particular, for any $s$ and $k$, there are only
finitely many minimal $(s,k)$-polar obstructions that
are cographs. For $s=k=1$, an explicit list follows from
the result of Foldes and Hammer mentioned above:
only $2K_2$ and its complement ($C_4$) are cograph
minimal $1$-polar obstructions.   In this paper we
provide a compact description of cograph minimal
$2$-polar obstructions.   We believe the ideas generated
might yield at least some kind of description of all cograph
minimal $(s,k)$-polar obstructions, and thus for a fairly
wide class of matrix partition problems. Moreover,
we believe that knowing the minimal obstructions
might lead to a certifying algorithm for the recognition
of these graphs.

It is worth noticing that Ekim, Mahadev and de Werra
proved in \cite{ekimDAM156} that it is possible to
recognize polar and monopolar graphs in polynomial
time in the class of cographs.  Moreover, they proved
that there are only finitely many cograph minimal polar
obstructions (eight), and cograph minimal monopolar
obstructions (eighteen).   In the same paper, they
propose the problem of finding a characterization of
$2$-polar cographs by forbidden subgraphs as a 
natural continuation of their work.



We will denote the complement of $G$ by $\overline{G}$.
Cographs can be characterized as those graphs $G$
such that they are either trivial, or one of $G$ or
$\overline{G}$ is disconnected, and its components
are cographs.   It follows from this characterization that if
$G$ is a cograph, then so is $\overline{G}$.   Observe
that $G$ is a $k$-polar cograph if and only if
$\overline{G}$ is a $k$-polar cograph as well. Therefore,
if $H$ is a cograph that is a minimal $k$-polar obstruction
then so is $\overline{H}$.   Hence, we can focus our
attention in disconnected cograph minimal $k$-polar
obstructions $H$. We denote the components of $H$
by $B_1, \dots, B_m$.   We say that a component of
$H$ is {\em trivial} or an {\em isolated vertex} if it is
isomorphic to $K_1$.

Given graphs $G$ and $H$, the disjoint union of $G$
and $H$ is denoted by $G + H$, and the join of $G$
and $H$ is denoted by $G \oplus H$.

Every pair of non-adjacent vertices of a $C_4$ are
called {\em antipodal} vertices. A {\em wheel} $W_k$
is a $C_k$ together with a universal vertex.

The rest of the paper is organized as follows.   In 
Section \ref{sec:Prelim}, we will prove some basic
facts on the structure of $k$-polar obstructions for
any positive integer $k$.   In Section \ref{sec:Switch}
we will introduce an operation that preserves the
$2$-polarity of a graph, proving some of its basic
properties.   Section \ref{sec:Main} is devoted to
prove our main result, exhibiting the complete
list of cograph minimal $2$-polar obstructions.
Finally, in Section \ref{sec:Conclusions}, we
present our conclusions and future lines of
research.

\section{Preliminar results}\label{sec:Prelim}

A minimal $k$-polar obstruction is {\em extremal}
if it has exactly $(k+1)^2$ vertices; the reason for
this name will be clear from Theorem \ref{upper} in
Section \ref{sec:Main}.   Our first lemma
states the possible number of components of a
minimal $k$-obstruction, as well as some
general facts about their structure.

There is one argument that we will be using in many
of our proofs.   Let $H$ be a minimal $k$-polar
obstruction, and let $v$ be a vertex in $H$.   Thus,
$H - v$ has a $k$-polar partition $(V_1, \dots, V_{2k})$.
We will assume that $A = \bigcup_{i=1}^k V_i$ induces a
multipartite graph with parts $V_1, \dots, V_k$.
Notice that, if at least two of these parts are
non-empty, then all the vertices in $A$ are contained
in a single component of $H - v$.   Otherwise, either
$A$ is empty, or only one of its parts is non-empty,
but, since these two cases can be usually handled
in a very similar way, we will often assume without loss
of generality that one of these parts is non-empty.

\begin{lemma} \label{kp-genres}
Let $H$ be a minimal $k$-polar obstruction.   The following
statements are true:
\begin{enumerate}
	\item $H$ has at most $k+2$ components;
	
	\item $H$ has at least one non trivial component;
	
	\item $H$ has at most $k+1$ trivial components;
	
	
	
	\item If $H$ has at least one trivial component, then $H$ has at
		most one non-complete component.
		
	\item If $H$ is not an extremal minimal $k$-polar obstruction, then every complete component is isomorphic to $K_1$ or $K_2$.
	
		
\end{enumerate}
\end{lemma}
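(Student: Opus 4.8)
The plan is to prove each of the five statements using the common argument outlined in the preamble: for a minimal $k$-polar obstruction $H$ and a vertex $v$, the graph $H-v$ admits a $k$-polar partition $(A,B)$ where $A$ induces a complete multipartite graph with at most $k$ parts and $B$ induces a disjoint union of at most $k$ cliques. The obstruction to extending this partition to include $v$ is what forces structural constraints on $H$.

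First I would establish the bounds on components. For statement (1), I expect to argue by contradiction: if $H$ had at least $k+3$ components, then removing a vertex $v$ from one component still leaves many components, and the remaining structure would be too ``spread out'' to resist a $k$-polar partition — essentially, the side $B$ (union of at most $k$ cliques) can only absorb so many disjoint pieces, and a complete multipartite graph $A$ must be connected when it has two nonempty parts, so having too many components forces either $A$ to live inside one component (leaving the others for $B$) or a contradiction with minimality. For statement (2), if every component were trivial then $H$ is an independent set, which is trivially $k$-polar (put everything in $B$ as singleton cliques, or in one part of $A$), contradicting that $H$ is an obstruction. For statement (3), I would count how many isolated vertices can be accommodated: each trivial component can go into $A$ (as a singleton part) or $B$ (as a singleton clique); since $A$ has at most $k$ parts and $B$ at most $k$ cliques, and using the interplay with the nontrivial component's partition, the bound $k+1$ should emerge by a careful pigeonhole/extremality argument on where the isolated vertices can be placed.

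Next I would treat the interaction statements (4) and (5), which are the substantive structural claims. For statement (4), suppose $H$ has a trivial component and also two distinct non-complete components. Removing a suitable vertex $v$ from one of them yields a partition of $H-v$; the key observation is that a non-complete component cannot be placed entirely on the $B$-side (a disjoint union of cliques forces each component placed there to be complete) nor entirely inside a single part of $A$ (a part of $A$ is a stable set, i.e.\ independent). So a non-complete component must straddle both $A$ and $B$ in a constrained way, and having two such components plus an isolated vertex over-commits the limited number of parts, letting me reinsert $v$ and derive a contradiction to $H$ being an obstruction. For statement (5), the extremality threshold $(k+1)^2$ (referenced via Theorem~\ref{upper}) is the crux: I would show that a complete component with at least three vertices, i.e.\ a $K_t$ with $t \geq 3$, can be partitioned efficiently (its vertices split between one part of $A$ and one clique of $B$), and if $H$ is not extremal there is enough ``slack'' in the counts so that such a large clique cannot be essential to the obstruction — reinserting the removed vertex then yields a valid partition unless the vertex count is forced up to the extremal value $(k+1)^2$.

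The main obstacle I anticipate is the careful bookkeeping in statements (4) and (5): tracking exactly how many parts of $A$ and cliques of $B$ are consumed by each component, and ensuring that when I remove a vertex $v$ and obtain a partition of $H-v$, I can always reinsert $v$ into some part without violating the complete-multipartite or disjoint-union-of-cliques conditions. The subtlety is that reinserting $v$ may require $v$ to be adjacent to all of one part of $A$ (to join that part's complete-multipartite structure as a new or existing part) or non-adjacent to the rest — the cograph structure and the specific adjacencies between components must be exploited to guarantee such a slot exists whenever the hypotheses of (4) or (5) are violated. Managing the case split between $A$ empty, $A$ having one nonempty part, and $A$ having two or more nonempty parts (as flagged in the preamble) will be where most of the routine-but-delicate work lies.
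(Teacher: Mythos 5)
Your overall strategy (delete a vertex, take a $k$-polar partition of $H-v$, and argue about where $v$ can be reinserted) is the same as the paper's, and your sketches for items 1 and 2 contain the right ingredients; but two of your proposed arguments have genuine gaps. For item 3, the ``pigeonhole on placement slots'' you describe does not work: isolated vertices do not each consume a part of $A$ or a clique of $B$, since \emph{all} of them can be placed together in a single stable part of $A$, so there is no slot shortage to exploit. The actual argument is immediate: by item 1 the number of components is at most $k+2$, and by item 2 at least one is nontrivial, hence at most $k+1$ are trivial. Similarly, in item 4 you are vague about which vertex to delete; it should be the isolated vertex, because it is precisely the impossibility of reinserting \emph{that} vertex which forces the partition of $H-v$ to have two nonempty stable parts, hence forces the whole multipartite side into a single component, leaving the second non-complete component to be covered by cliques alone --- impossible for a connected non-complete graph. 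Your phrase about non-complete components ``straddling $A$ and $B$'' and ``over-committing parts'' gestures at this but does not pin down the contradiction.

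For item 5 your sketch has no mechanism, only the hope that non-extremality gives ``slack.'' The paper's proof rests on a concrete dichotomy that your outline is missing. Let $B_1$ be a complete component with at least $3$ vertices, $v \in B_1$, and $P = (V_1, \dots, V_{2k})$ a $k$-polar partition of $H-v$. Either some clique $V_i$ of $P$ meets $B_1 - v$, in which case $V_i \cup \{v\}$ is again a clique with no edges to the other cliques (completeness of $B_1$ is used here), so $v$ can be reinserted, a contradiction; or no clique meets $B_1 - v$, so $B_1 - v$ is the \emph{entire} multipartite side and all $k$ stable parts are nonempty (an empty part would again absorb $v$). Note that in this second case each stable part holds at most one vertex of $B_1 - v$; your claim that a large complete component can be ``split between one part of $A$ and one clique of $B$'' is wrong on this point, since a part of $A$ is a stable set. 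The second case then forces every other component to be complete (it must be covered by the cliques of $P$) with at least $k+1$ vertices (otherwise exchange roles: cover $B_1$ by a single clique and place the small complete component into the multipartite side as singleton parts), and there must be at least $k+1$ such components, else $H$ would be $k$-polar; hence $H$ is $(k+1)K_{k+1}$, which is extremal. Without this dichotomy and the resulting forcing of $(k+1)K_{k+1}$, item 5 remains unproven.
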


\begin{proof}
For {\em 1.}, suppose, by contradiction, that $H$ has more than $k+2$
components.   If there are isolated vertices in $H$, consider $v$, one
of them.  Thus, $H-v$ has at least $k+2$ components, and by the
minimality of $H$, it has a $k$-polar partition $P = (V_1, \dots, V_{2k})$.
If there is a unique non empty stable set in this partition, then we can
assume without loss of generality that this set is $V_1$, and hence,
$(V_1 \cup \{ v \}, V_2, \dots, V_{2k})$ is a $k$-polar partition of $H$,
a contradiction. Thus, the subgraph induced by $\bigcup_{i=1}^k V_i$
is connected, and hence contained in a single component of $H-v$.
But in this case, the rest of the $k+1$ components should be covered
by $k$ cliques, which is impossible.

If there are no isolated vertices in $H$, consider any vertex $v$ of
$H$. Let $P = (V_1,\ldots, V_{2k})$ be a $k$-polar partition of $H-v$.
Note that $H-v$ has at least $k + 3$ components of which at least
$k+2$ are not trivial. Hence, $(H-v)-\bigcup_{i=1}^k V_i$ has at
least $k+2$ components that should be covered by $k$ cliques, a
contradiction.

Item {\em 2.} follows from the fact the any empty graph is trivially $k$-polar. Item {\em 3} follows from {\em 1.} and {\em 2}.

For {\em 4.}, suppose that $H$ has one trivial component and let
$v$ be an isolated vertex of $H$. By contradiction, suppose that
$B_1$ and $B_2$ are two non-complete components of $H$.
Since a $k$-polar partition $P= (V_1,\ldots, V_{2k})$ of $H-v$
has necessarily two non-empty stable sets (otherwise, if we add
$v$ to the unique non-empty stable set of $P$, or to any stable set
of $P$ if all of them are empty, we would obtain a $k$-partition of
$H$, a contradiction), and $B_1$ and $B_2$ cannot be covered
only by cliques, $\bigcup_{i=1}^k V_i$ belongs to one of $B_1$ or
$B_2$, let us say, $B_1$.  Now, $B_2$ cannot be covered only by
cliques, since it is connected. But it also has no vertex belonging to
$V_1,\ldots, V_k$. By consequence, $H-v$ has no $k$-polar
partition, a contradiction.

For {\em 5.}, let $B_1$ be any complete component with more than
$2$ vertices. Let $v$ be any vertex of $B_1$ and let $P = (V_1,
\dots, V_{2k})$ be a $k$-polar partition of $H - v$.  If $V(B_1 - v)
\cap V_i \ne \varnothing$ for some $i \in \{ k+1,\ldots, 2k \}$, then
$(V_1, \ldots, V_i \cup \{ v \}, \ldots, V_{2k})$ is a $k$-partition for $H$,
contradicting $H$ to be a minimal obstruction. Thus, $V(B_1 - v) = 
\bigcup_{i=1}^k V_i$, with $V_i \ne \varnothing$ for $1 \le i \le k$,
else, we could place $v$ in one of the empty stable sets to obtain a
$k$-polar partition of $H$, a contradiction. Now, all the other
components of $H$ have to be complete, and cannot have less
than $k+1$ vertices, otherwise by covering $B_1$ by a clique and
any smaller clique by $k$ completely adjacent stable sets would
lead to a $k$-partition of $H$, a contradiction. As a conclusion,
every other component is a complete graph with at least $k+1$
vertices and there are at least $k+1$ components, otherwise $H$
would be $k$-polar. Therefore $H$ is the extremal $k$-polar
obstruction $(k+1)K_{k+1}$.

\end{proof}

The following Lemma describes the family of graphs with exactly
$k+2$ components and at least one of them being trivial.  

\begin{lemma} \label{k+2comp}
Let $\ell$ be an integer such that $1 \le
\ell \le k+1$.  Up to isomorphism, there
is exactly one minimal $k$-polar
obstruction with $k+2$ components
and precisely $\ell$ of them trivial, and it
is isomorphic to
$$\ell K_1 + (k-\ell+1) K_2 + K_{\ell, \ell}.$$
Moreover, every minimal $k$-polar
obstruction with $k+2$ components
has at least one trivial component.
\end{lemma}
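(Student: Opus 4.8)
The plan is to reduce the statement to a clean criterion for $k$-polarity of graphs with $k+2$ components, and then to pin down the single non-complete component by a deletion-plus-rigidity argument. For a graph $D$, let $m(D)$ denote the least number of cliques into which $D$ can be partitioned after deleting an independent set; equivalently $m(D)$ is the least $t$ for which $D$ is $(1,t)$-polar. The first step is to observe that if a graph $F$ has components $D_1,\dots,D_c$ with $c\ge k+2$, then $F$ is $k$-polar if and only if $\sum_i m(D_i)\le k$. Indeed, in a $k$-polar partition $(A,B)$, if $A$ had at least two parts it would lie inside a single component, leaving at least $k+1$ components inside $B$ and hence at least $k+1$ cliques, which is impossible; so $A$ is an independent set, and since $B$ has no edges between components its cliques split up over the $D_i$, giving exactly $\sum_i m(D_i)$ as the optimum. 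I will also use two easy facts: $m(D)\ge 1$ whenever $D$ is connected with at least two vertices, and $m(K_{n_1,\dots,n_r})$ equals the second largest part size (delete the largest part entirely, then clique-partition what remains).

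With this in hand the ``moreover'' clause is immediate. Suppose $H$ has $k+2$ components and no trivial one; then every component has at least two vertices, so $m(D_i)\ge 1$ for all $i$. Deleting any vertex $v$ leaves the $k+1$ components other than the one containing $v$ untouched and still non-trivial, so any $k$-polar partition of $H-v$ would require $\sum m\ge k+1>k$ (and $H-v$ still has at least $k+1$ components, so the two-part option remains unavailable). Hence no $H-v$ is $k$-polar, contradicting minimality; therefore $H$ has a trivial component.

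For the main assertion I would first check that $G=\ell K_1+(k-\ell+1)K_2+K_{\ell,\ell}$ is a minimal obstruction: it is not $k$-polar because $\sum_i m(D_i)=(k-\ell+1)+\ell=k+1$, while each $G-v$ is $k$-polar---deleting an isolated vertex lets one place the whole $K_{\ell,\ell}$ into $A$ (two parts) with the remaining $k$ complete components as the cliques, and deleting a vertex of a $K_2$ or of $K_{\ell,\ell}$ lowers the relevant $\sum m$ to exactly $k$. For uniqueness, let $H$ have $k+2$ components and exactly $\ell$ trivial ones. Since $k+2\ne k+1$, $H$ is not the extremal obstruction $(k+1)K_{k+1}$, so by Lemma~\ref{kp-genres}(5) every complete component is $K_1$ or $K_2$, and by Lemma~\ref{kp-genres}(4) at most one component $C$ is non-complete. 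If all components are complete then $\sum m=k+2-\ell$, which exceeds $k$ only for $\ell=1$, forcing $H=K_1+(k+1)K_2=\ell K_1+(k-\ell+1)K_2+K_{\ell,\ell}$. Otherwise $C$ exists; non-$k$-polarity of $H$ gives $m(C)\ge\ell$, deleting an isolated vertex (which kills the $\sum m$ route and leaves only the option of absorbing a whole component into $A$, necessarily $C$ since it is the sole non-complete component) forces $C$ to be complete multipartite with at most $k$ parts, and deleting a vertex of $C$ together with the general bound $m(C)-1\le m(C-v)\le m(C)$ forces $m(C)=\ell$ and $m(C-v)=\ell-1$ for every $v\in C$.

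The crux is the final rigidity step: identifying the complete multipartite graph $C$ from the deletion data. Writing $C=K_{n_1,\dots,n_r}$ with $n_1\le\cdots\le n_r$, the formula for $m$ gives $n_{r-1}=m(C)=\ell$. Deleting a vertex of the largest part must drop $m$ to $\ell-1$, which is impossible if $n_r\ge\ell+1$ (the second largest would still be $\ell$); hence $n_r=\ell$. But then if there were a third part, deleting one of its vertices would leave two parts of size $\ell$ and keep $m=\ell$, again a contradiction; so $r=2$ and $C=K_{\ell,\ell}$, giving $H=\ell K_1+(k-\ell+1)K_2+K_{\ell,\ell}$ as claimed. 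I expect this rigidity argument---together with the bookkeeping ensuring that partial, rather than whole-component, placements into $A$ never help after a deletion---to be the only genuinely delicate part; the $k$-polarity criterion and the computation of $m$ on complete multipartite graphs are routine.
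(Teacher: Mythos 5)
Your organization of the proof around the invariant $m$ is a clean reformulation of the paper's deletion arguments, and several pieces are sound: the criterion for graphs with at least $k+2$ components, the check that $\ell K_1+(k-\ell+1)K_2+K_{\ell,\ell}$ is a minimal obstruction, and the ``moreover'' clause (after fixing a slip: to rule out a two-part $A$ you need $H-v$ to retain at least $k+2$ components, not $k+1$; this does hold, because the component containing $v$ is non-trivial and so does not vanish). The genuine gap is the formula $m(K_{n_1,\dots,n_r})=n_{r-1}$, which is the engine of your rigidity step. In a polar partition the clique side must induce a \emph{disjoint union} of cliques --- no edges between distinct cliques --- so ``delete the largest part, then clique-partition what remains'' computes a clique \emph{cover} number, not $m$. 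If $r\ge 3$ and $n_{r-1}\ge 2$, then after deleting any independent set (necessarily contained in a single part) the remainder is connected and not complete, so $K_{n_1,\dots,n_r}$ is not $(1,t)$-polar for any $t$: the correct value is $m=\infty$, not $n_{r-1}$. For the same reason your ``general bound'' $m(C)-1\le m(C-v)$ is false: for $C=K_{1,2,2}$ (the wheel $W_4$) and $v$ its hub, $m(C)=\infty$ while $m(C-v)=m(K_{2,2})=2$.

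This is not repairable by bookkeeping, because the premises you actually derive do not determine $C$. Take $k=3$, $\ell=4$, $C=K_{1,2,2}$: then $m(C)=\infty\ge\ell$, $C$ is complete multipartite with $3\le k$ parts, and every vertex deletion from $C$ leaves $m\le\ell-1$ (the hub gives $K_{2,2}$ with $m=2$, a rim vertex gives $K_{1,1,2}$ with $m=1$); your formula, which assigns $m(C)=2<\ell$, is exactly what makes this case seem excluded. In fact $4K_1+W_4$ \emph{is} a minimal $3$-polar obstruction (it is not $3$-polar, since $W_4$ minus any independent set is never a disjoint union of cliques, while deleting an isolated vertex, the hub, or a rim vertex leaves a $3$-polar graph), so the uniqueness assertion of the lemma is itself false for $k\ge 3$ at $\ell=k+1$, and no argument could close your gap there. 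The paper's own proof stumbles at the same spot --- the jump from ``$K-W_1$ is an independent set'' to ``$K$ is a complete bipartite graph'' overlooks that the deleted vertex $u$ may form a third, singleton part, which is exactly what happens in $W_4$ --- but for $k=2$, the only case the paper uses, both arguments are in good shape, because the isolated-vertex deletion caps $C$ at two parts, and for bipartite graphs your formula and your bound are correct. For $\ell\le k$ the statement can still be rescued, but only via a deletion you never perform in the uniqueness step: removing a vertex of a $K_2$ component, which forces $m(C)\le\ell$, in particular $m(C)$ finite, hence (by the correct classification of monopolar complete multipartite graphs) $C$ bipartite when $\ell\ge 2$, after which your computation goes through. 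A smaller point: your appeal to Lemma~\ref{kp-genres}(5) uses more than its statement, since ``extremal'' refers to having $(k+1)^2$ vertices and, e.g., $3K_1+K_{3,3}$ is an extremal $2$-polar obstruction with $k+2$ components; what you need is the assertion its proof actually establishes, namely that every complete component is $K_1$ or $K_2$ unless $H\cong(k+1)K_{k+1}$.
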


\begin{proof}

Let us consider a $k$-polar minimal
obstruction $H$ satisfying the
requirements of the Lemma. Let $v$
be an isolated vertex of $H$.  The
graph $H-v$ admits a $k$-polar partition
$P=(V_1, \dots, V_{2k})$, such that, at least
two of the stable sets are non-empty.
Otherwise, if we add $v$ to the only
non-empty stable set of $P$ (if any,
otherwise place $v$ in $V_1$),
then the resulting partition would be a
$k$-polar partition for $H$.   Thus, all
the stable sets of $P$ are contained in
the same component of $H-v$.   Now,
the remaining $k$ components of $H-v$
should be covered by the $k$ cliques
in $P$.   But this means that the
component containing the stable sets
of $P$ is a complete multipartite graph.

Thus, $H$ is the disjoint union of
$\ell K_1$, $(k-\ell+1)$ non-trivial
cliques, and a complete $m$-partite
graph $K$, with $2 \le m \le k$.

Now, let $u$ be a vertex in $K$.
Again, $H-u$ has a $k$-polar
partition $P' = (W_1, \dots, W_{2k})$.
Since $H-u$ has at least $k+2$
components, and the cliques of
$P'$ can cover at most $k$ different
components, it must be the case
that only one of the stable sets
of $P'$ is non-empty, say $W_1$,
and contains all the isolated
vertices of $H$.   Hence, $K
- W_1$ must be a disjoint union
of complete graphs, because it
should be covered by the cliques
of $P'$.   But this means that
$K-W_1$ is an independent set
with at most $k - (k - \ell + 1) =
\ell - 1$ vertices.   Thus, $K$ is
a complete bipartite graph.
It is easy to observe that if
$K$ is smaller than $K_{\ell,
\ell}$, then $H$ admits a $k$-polar
partition.   Finally, it follows from
Lemma \ref{kp-genres} that the
remaining $(k-\ell+1)$ non-trivial
complete components are copies
of $K_2$.

For the final statement, it is easy
to verify that $K_1 + (k+1) K_2$
is a minimal $k$-polar obstruction.
Thus, any graph with $k+2$
non-trivial components properly
contains this obstruction.
\end{proof}

\section{Switching and partial complementation} \label{sec:Switch}

As we have mentioned in the introduction, $k$-polar
cographs are a very convenient class of $(k,l)$-polar
cographs in terms of forbidden induced subgraph
characterization; in order to find all the cograph minimal
$k$-polar obstructions, it suffices to find only the
disconnected ones.   The family of $2$-polar cographs
enjoys an additional property not shared by $k$-polar
cographs with $k > 2$. Specifically, there are two very
natural operations preserving the $2$-polarity of a graph,
which lead to a much more compact list of minimal
obstructions, cf. Theorem \ref{reducedObs}.

Given a graph $H$ and one of its vertices $v$, a graph
$H'$ can be obtained from $H$ by a {\em switching} on
$v$, that is, by making $N_{H'}(v) = V(H) - N_{H}(v)$,
while the rest of the graph remains unaltered.   A {\em
partial complement} of $H$ is a graph obtained by
splitting the components of $H$ into two graphs, $H'$
and $H''$, and taking separately the complement of
each of them.  Notice that if $H$ is connected, then
one of $H'$ or $H''$ is empty, and the other one is
$H$; in this case, the partial complement coincides
with the complement.  Observe that a disconnected
graph $H$ with three or more components has several
different ways of taking partial complementation, but,
as long as both $H'$ and $H''$ are non-empty the
resulting graph will always be disconnected.

Notice that partial complementation can be defined
in terms of switching and regular complementation in
the following way.   Consider a disconnected graph
$H$, and split its components into two graphs $H'$
and $H''$.   Now, perform switches on every vertex of
$H'$ (this will leave us with a graph which has the
same edges as $H$, plus all the edges between $H'$
and $H''$), and then, take the complement of the
resulting graph.   Clearly, this procedure yields the
same result as taking a partial complement with
$H'$ and $H''$.

\begin{lemma} \label{switch-pc}
If $H$ is a $2$-polar graph, and $v$ is a vertex in
$H$, then the graph obtained from $H$ by switching
on $v$ is also $2$-polar.   If additionally $H$ is a
disconnected cograph, then any partial complement
of $H$ is again a disconnected $2$-polar cograph.
\end{lemma}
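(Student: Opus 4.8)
The plan is to prove the two claims in Lemma~\ref{switch-pc} separately, using the second definition of partial complementation (via switching and complementation) to reduce the second claim largely to the first.

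\medskip

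First I would handle switching. Suppose $H$ is $2$-polar, with a $2$-polar partition whose two stable parts are $V_1, V_2$ (inducing a complete multipartite subgraph on $A = V_1 \cup V_2$) and whose two clique parts are $V_3, V_4$ (inducing a disjoint union of cliques on $B = V_3 \cup V_4$). Given a vertex $v$, switching replaces $N_H(v)$ by $V(H) - N_H(v)$ and leaves everything else unchanged. My strategy is to construct a $2$-polar partition of the switched graph $H'$ by simply moving $v$ into the ``opposite type'' of part. The cleanest way to argue this is to observe that switching on $v$ is an involution that, from $v$'s point of view, exchanges adjacency and non-adjacency with every other vertex while preserving all edges and non-edges not incident with $v$. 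Concretely, I would take whichever part $V_i$ currently contains $v$, delete $v$ from it, and then place $v$ into a part of the other type. For instance, if $v \in V_1$ (a stable part), then in $H'$ the vertex $v$ is adjacent to everything it was previously non-adjacent to and vice versa; I want to show $v$ can serve as a singleton clique part or be absorbed into the clique side. The key observation is that because $A$ induces a complete multipartite graph and $B$ induces a disjoint union of cliques, exchanging $v$'s neighbourhood is compatible with exchanging the \emph{role} of $v$ between the multipartite side and the clique side. I would verify the four defining adjacency conditions of a $2$-polar partition directly for the relocated $v$, checking the (few) cases according to which of the four parts originally contained $v$.

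\medskip

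Second, for partial complementation, I would invoke the reduction already spelled out in the text preceding the lemma: a partial complement of $H$ (splitting the components into $H'$ and $H''$) equals the result of switching on every vertex of $H'$ and then taking the full complement. Since switching preserves $2$-polarity by the first part, applied iteratively to each vertex of $H'$ the intermediate graph remains $2$-polar; and since a graph is $2$-polar if and only if its complement is (this is noted in the introduction, as $G$ is $k$-polar iff $\overline{G}$ is), the final complement is again $2$-polar. For the cograph claim, I would similarly use that $G$ is a cograph iff $\overline{G}$ is a cograph (stated in the excerpt), together with the fact that switching preserves the cograph property when $H$ is a disconnected cograph. That switching preserves cographness is the one point needing care: I would argue it by checking that no induced $P_4$ can be created, using the modular structure of cographs and the fact that in a disconnected cograph each vertex is either adjacent to all or none of the vertices in any other component. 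Finally, disconnectedness of the partial complement when both $H'$ and $H''$ are non-empty is immediate from the definition, as already observed in the text.

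\medskip

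The main obstacle I expect is the cograph-preservation step, i.e.\ showing that switching (equivalently, partial complementation) does not introduce an induced $P_4$. The $2$-polarity parts are essentially bookkeeping on the partition, but preserving the $P_4$-free property requires understanding how switching on a single vertex $v$ interacts with the global cograph structure. The saving grace is that $H$ is assumed \emph{disconnected}: in a disconnected cograph, the adjacency between a vertex $v$ and any vertex $w$ in a different component is completely determined (non-adjacent if they lie in different components, and within $v$'s own component the cograph structure is rigid). I anticipate the argument will proceed by analysing a hypothetical induced $P_4$ in the partial complement and tracing its vertices back to the split $H', H''$, using that partial complementation only alters adjacencies \emph{between} the two sides $H'$ and $H''$ (turning all of them on) while complementing within each side, so that any putative $P_4$ would have to respect a very restricted pattern across the $H'$/$H''$ boundary.
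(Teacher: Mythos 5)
Your reduction of the second statement to switching plus complementation matches the paper's own strategy, but both substantive steps you describe contain genuine errors.

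First, the switching argument: you propose to move $v$ to a part of the \emph{opposite} type (from the multipartite side to the clique side). This fails, because in a polar partition the edges between $A = V_1 \cup V_2$ and $B = V_3 \cup V_4$ are completely unrestricted, so complementing $v$'s neighbourhood gives you no control over how $v$ attaches to $B$: if $v$ was originally adjacent to some but not all vertices of $V_3$, it still is after switching, so it can be absorbed into neither clique, and making $\{v\}$ a third clique part exceeds the bound of two. The correct move --- the one the paper makes --- keeps $v$ on the \emph{same} side and exchanges it between the two parts of that side: if $v \in V_1$, then $v$ is nonadjacent to $V_1 \setminus \{v\}$ and completely adjacent to $V_2$, so after switching it is completely adjacent to $V_1 \setminus \{v\}$ and nonadjacent to $V_2$, i.e., it has exactly the adjacency pattern of a vertex of $V_2$; hence $(V_1 \setminus \{v\}, V_2 \cup \{v\}, V_3, V_4)$ is a $2$-polar partition of the switched graph, the cross edges to $B$ being irrelevant. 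Note this is genuinely special to $k=2$: with three or more stable parts, the switched vertex becomes nonadjacent to all parts other than its own, which matches no part's pattern. Your proposed exchange of roles between the two sides, if it worked, would work for every $k$, contradicting the paper's remark that this operation is peculiar to $2$-polarity.

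Second, the cograph step: your key claim --- that switching preserves the cograph property when $H$ is a disconnected cograph --- is false. Take $H = P_3 + K_1$ with path $a\,b\,c$ and isolated vertex $d$, and switch on $a$: the resulting edge set is $\{bc, ca, ad\}$, an induced $P_4$. So the route you single out as ``the main obstacle'' is a dead end; the intermediate graphs obtained by switching every vertex of $H'$ need not be cographs, and no $P_4$-analysis across the $H'/H''$ boundary is needed. (Your description of the partial complement is also off: in $\overline{H'} + \overline{H''}$ there are \emph{no} edges between the two sides, not all of them; you are describing the intermediate switched graph before complementation.) The paper's argument is direct and one line long: $H'$ and $H''$ are disjoint unions of components of the cograph $H$, hence cographs; complements of cographs are cographs; and $\overline{H'} + \overline{H''}$ is therefore a disjoint union of cographs, hence a cograph, disconnected whenever both sides are non-empty. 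Switching is needed only for the $2$-polarity claim, where the non-cograph intermediate graphs are harmless.
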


\begin{proof}
Let $(V_1, V_2, V_3, V_4)$ be a $2$-polar partition
of $H$.    We will assume that $v \in V_1$, the
remaining cases can be dealt similarly.   Since 
$V_1 \cup V_2$ induces a complete bipartite graph
(where $V_2$ is possibly empty), $v$ is adjacent
to every vertex in $V_2$ and non-adjacent to every
vertex in $V_1$.   Thus, after switching on $v$, it
is clear that $(V_1 \setminus \{ v \}, V_2 \cup \{ v \},
V_3, V_4)$ is a $2$-polar partition of the resulting
graph.

For the second statement, split the components of
$H$ into $H'$ and $H''$.   From the remark previous
to this lemma, and the previous paragraph, it is clear
that taking the partial complement of $H$ with $H'$
and $H''$ yields a $2$-polar graph.    Since $H$ is
a cograph, $H'$ and $H''$ are also cographs, as well
as their complements.    Thus, the partial complement
of $H$ is a disjoint union of cographs, which is again
a disconnected cograph.
\end{proof}

Since in general switching does not preserve the
property of being a cograph, but partial complementation
does, we will restrict ourselves to the use of the latter.
It follows from Lemma \ref{switch-pc} that if $H$ is a
cograph minimal $2$-polar obstruction, then any
partial complement of $H$ is also a cograph minimal
$2$-polar obstruction.   Since partial complements are
reversible, if we define two graphs to be related if one
can be obtained by a sequence of partial
complementations from the other, then this defines
an equivalence relation.   In particular, it follows by
the previous remark that the family of cograph
minimal $2$-polar obstructions admits a partition into
equivalence classes under this relation.

Let $\mathcal{H}_7, \mathcal{H}_{8A}, \mathcal{H}_{8B}$
and $\mathcal{H}_9$ be the families of graphs depicted
in Figures \ref{(2,2)-obs7}, \ref{(2,2)-obs9}, \ref{(2,2)-obs8A},
and \ref{(2,2)-obs8B}, respectively, and
together with their respective complements.

\begin{lemma} \label{closure}
The families $\mathcal{H}_7, \mathcal{H}_{8A},
\mathcal{H}_{8B}$ and $\mathcal{H}_9$ are families
of cograph minimal $2$-polar obstructions closed
under partial complementation.
\end{lemma}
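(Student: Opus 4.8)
The plan is to prove two separate assertions for each of the four families: that every graph in the family is a cograph minimal $2$-polar obstruction, and that the family is closed under partial complementation.
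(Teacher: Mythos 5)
Your proposal only restates the lemma as a two-part plan; it contains no argument for either part, and both parts require actual work. The statement is a finite verification claim about four explicit families of graphs, so a proof must either carry out the checks or at least exhibit the structure that makes them routine. For the first assertion you would need to verify, for each depicted graph and its complement, that it is $P_4$-free, that it admits no $2$-polar partition, and that deleting any single vertex leaves a $2$-polar graph (vertex-deletion suffices for minimality, since the property is hereditary). For the second assertion you would need to consider, for each disconnected member, every way of splitting its components into two parts $H'$ and $H''$, complement each part, and check that the result again lies in the same family (for connected members partial complementation is just complementation, and the families are closed under complement by definition). None of this appears in your proposal, so as written it proves nothing.

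For comparison, the paper's proof also declares the membership and closure checks to be routine finite verifications, but it then substantiates the closure claim concretely for $\mathcal{H}_7$ by exhibiting the partial-complementation orbit of $F_1$: writing $F_1 \cong 3K_2 + K_1$, it identifies $F_2 \cong \overline{2K_2} + \overline{K_2 + K_1}$, $F_5 \cong \overline{3K_2} + K_1$, and $F_4 \cong \overline{K_2} + \overline{2K_2 + K_1}$ as single partial complements of $F_1$, and obtains $F_3$ as $\overline{\overline{2K_1 + K_1} + K_1} + K_1$ by a further step. This explicit orbit computation is the substantive content that makes the closure claim credible and is exactly the kind of detail your plan would need to supply (for all four families, not just $\mathcal{H}_7$) before it could count as a proof.
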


\begin{proof}
It is a simple exercise to verify that each of the depicted
graphs is a cograph minimal $2$-polar cograph, and,
although it takes a while, it is simple as well to verify
that every possible partial complement of every member
of each of the families, belongs again to the same family.
We will mention how to obtain the rest of the disconnected
members of $\mathcal{H}_7$ by a sequence of partial
complementations from $F_1$, the rest of the families
can be dealt in a similar way.

Recall that $F_1$ is isomorphic to $3K_2 + K_1$.
Notice that $F_2$ is isomorphic to $\overline{2K_2}
+ \overline{K_2 + K_1}$, $F_5$ is isomorphic to
$\overline{3K_2} + K_1$, and $F_4$ is isomorphic
to $\overline{K_2} + \overline{2K_2 + K_1}$.   Thus,
$F_2, F_4$ and $F_5$ can be obtained from $F_1$
by a single partial complementation.   Now, observe
that the $\overline{K_2}$ in $F_4$ is just a $2K_1$,
so we can get $F_3$ as $\overline{\overline{2K_1 +
K_1} + K_1} + K_1$.
\end{proof}

The following simple observation will be very
useful in the next section.   If $H$ is a minimal
$(s,k)$-polar obstruction, then it should contain
a minimal $(n,m)$-polar obstruction for every
$n \le s$ and every $m \le k$.   Otherwise $H$
would admit an $(n,m)$-polar partition, which
is also an $(s,k)$-polar partition.   In particular,
each minimal $2$-polar obstruction should
contain a polar split obstruction (a $2K_2$ or
a $C_4$), a minimal $(2,1)$-polar cograph
obstruction or a minimal $(1,2)$-polar cograph
obstruction.   Hence, it will be useful to reproduce,
in Figure \ref{(2,1)-obs}, the complete list of
cograph minimal $(2,1)$-polar obstructions
obtained by Bravo et al. in \cite{bravo}.

\begin{figure}
\includegraphics[width=\textwidth]{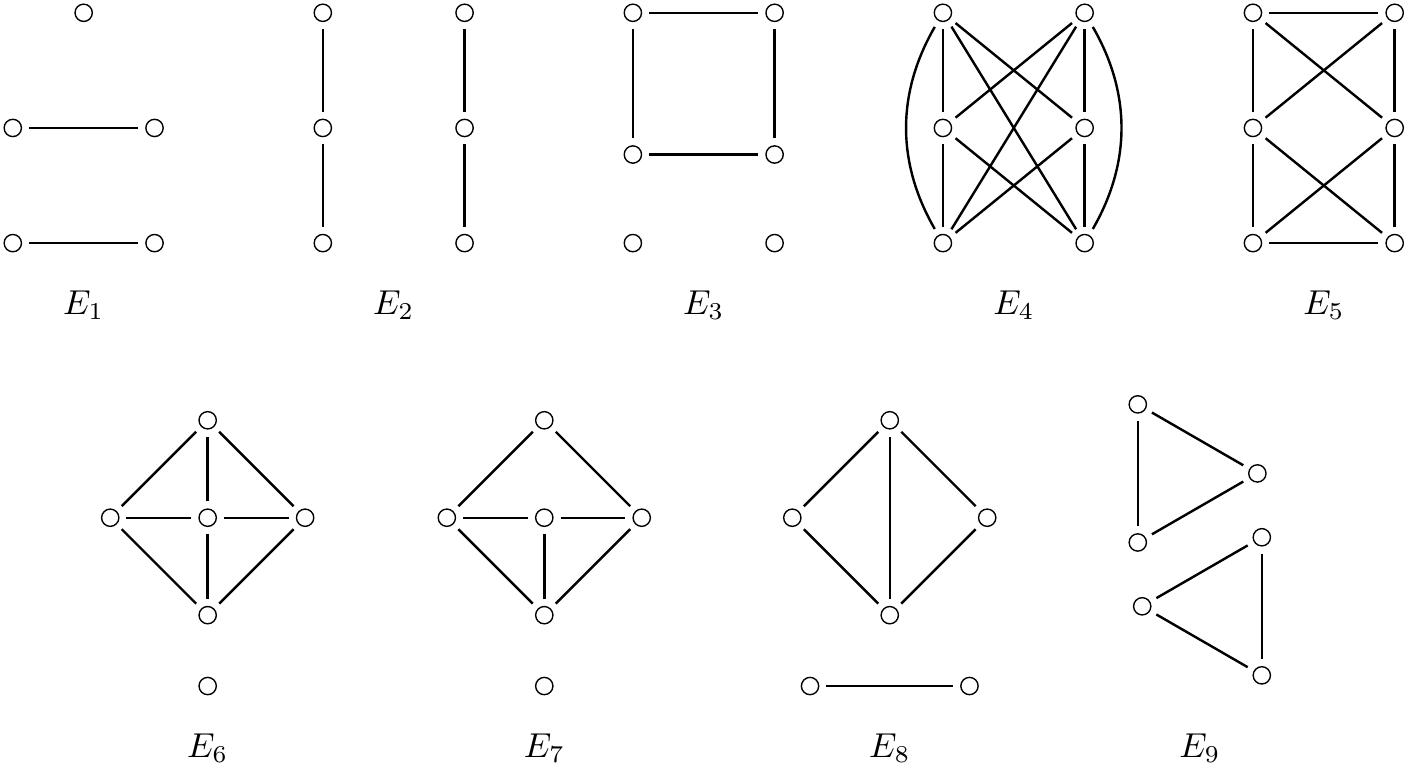}
\caption{Cograph $(2,1)$-polar minimal obstructions.} \label{(2,1)-obs}
\end{figure}

\section{$2$-polar cographs}\label{sec:Main}

The following theorem, giving an upper bound
on the number of vertices of a cograph minimal
$k$-polar obstruction, is implicitely proved in
\cite{feder2006} by Feder, Hell and
Hochst\"attler.

\begin{theorem} \label{upper}
Let $H$ be a cograph minimal $(s,k)$-polar obstruction.
Then, $H$ has at most $(s+1)(k+1)$ vertices.
\end{theorem}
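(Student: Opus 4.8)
The plan is to prove that a cograph minimal $(s,k)$-polar obstruction $H$ has at most $(s+1)(k+1)$ vertices by exploiting the recursive structure of cographs. The key fact is that if $H$ is a cograph, then either $H$ or $\overline{H}$ is disconnected, and by the symmetry noted in the introduction (namely that $H$ is a minimal $(s,k)$-polar obstruction if and only if $\overline{H}$ is a minimal $(k,s)$-polar obstruction, since complementation swaps complete multipartite parts with unions of cliques), I may assume without loss of generality that $H$ itself is disconnected. I would write $H = B_1 + \cdots + B_m$ for its connected components.

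First I would analyze how the components interact with a hypothetical $(s,k)$-polar partition. The idea is to use the minimality of $H$ together with the component structure to bound the ``size'' each component can contribute. The natural approach is induction, or a direct counting argument: if $H$ is disconnected, then in any $(s,k)$-partition $(A,B)$ the complete multipartite side $A$ is connected (when it has at least two nonempty parts), so $A$ lives inside a single component, and all the remaining components must be absorbed into the clique side $B$, which can host at most $k$ cliques. I would leverage this to bound both the number of components and the size of each. The minimality hypothesis is what lets me delete a vertex from a well-chosen component, invoke the existence of an $(s,k)$-partition of $H-v$, and derive a contradiction if any component is too large.

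I expect the cleanest route is an inductive argument on $s+k$ mirroring the cograph decomposition: when $H$ is disconnected, peel off components and relate the pieces to smaller obstructions, using that any minimal $(s,k)$-obstruction must contain minimal $(n,m)$-obstructions for all $n \le s$, $m \le k$ (the observation preceding Figure \ref{(2,1)-obs}). The base cases $s=0$ or $k=0$ are easy since $(0,k)$-polar means a disjoint union of at most $k$ cliques and the minimal obstruction is $(k+1)K_1$ with $k+1 = (0+1)(k+1)$ vertices, matching the bound. The extremal obstruction $(k+1)K_{k+1}$ from Lemma \ref{kp-genres} with exactly $(k+1)^2$ vertices, and the family $\ell K_1 + (k-\ell+1)K_2 + K_{\ell,\ell}$ from Lemma \ref{k+2comp}, both saturate or stay within the bound, which reassures me the constant $(s+1)(k+1)$ is tight and correctly guessed.

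The main obstacle will be controlling the complete multipartite component itself: unlike the clique side, a single component on the $A$-side can be a large complete multipartite graph, and I must argue that in a \emph{minimal} obstruction this component cannot grow without bound. The heart of the argument is to show that each part of the multipartite side, and each clique of the clique side, is forced to have bounded size by minimality --- essentially that any part exceeding the allowed multiplicity would let me delete a redundant vertex and extend a partition of $H-v$ back to $H$. Making this local redundancy argument precise, and checking that it simultaneously bounds the number of parts (by $s$ and $k$) and the size within each part, is where the real work lies; once every one of the at most $s+k$ parts is shown to have at most the requisite number of vertices, the product bound $(s+1)(k+1)$ follows by straightforward counting.
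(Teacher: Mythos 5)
There is a genuine gap: your proposal is a strategy outline, not a proof. The decisive step --- showing that minimality forces every part of an $(s,k)$-partition of $H-v$ to have bounded size --- is exactly the point you defer (``making this local redundancy argument precise \ldots is where the real work lies''), and nothing in the proposal actually carries it out. Worse, even if that local step were established, the final counting you describe does not yield the stated bound. The natural per-part bounds are $k+1$ vertices for each of the at most $s$ stable parts and $s+1$ vertices for each of the at most $k$ cliques; adding back the deleted vertex this gives $s(k+1)+k(s+1)+1 = 2sk+s+k+1$, which overshoots $(s+1)(k+1)=sk+s+k+1$ by $sk$. So the claim that ``the product bound follows by straightforward counting'' is false as stated: the correct argument must account globally for how the multipartite side is confined to a single component and how the remaining components are shared among the $k$ cliques, not simply sum worst-case part sizes. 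A smaller inaccuracy: in your base case, $(k+1)K_1$ is not the only cograph minimal $(0,k)$-polar obstruction (for $k\ge 2$, $P_3$ is another), though the bound happens to hold there.

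For comparison, note that the paper does not prove Theorem \ref{upper} at all: it imports the result, observing that it is implicitly proved by Feder, Hell and Hochst\"attler in \cite{feder2006}. So there is no in-paper argument your sketch could be matched against; to make your approach work you would need to reconstruct the actual argument of \cite{feder2006} (or an equivalent one), in particular a correct accounting scheme replacing the per-part counting above. The ingredients you cite from the paper (Lemmas \ref{kp-genres} and \ref{k+2comp}, the observation that the multipartite side lies in one component) are consistent with such a proof but do not constitute one.
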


It follows from Theorem \ref{upper} that cograph
minimal $k$-polar obstructions have at most
$(k+1)^2$ vertices, and thus, obstructions attaining
this upper bound are called extremal.   In particular,
cograph minimal $2$-polar obstructions have at
most nine vertices.   The following lemma gives a
lower bound on the number of vertices of a minimal
$2$-polar obstruction (not necessarily a cograph),
as well as a structural property about the minimal
obstructions attaining this bound.

\begin{lemma} \label{7v-lemma}
Let $H$ be a minimal $2$-polar obstruction.
\begin{enumerate}
	\item $H$ has at least seven vertices.
	
	\item If $H$ has seven vertices and three
		connected components, then at least
		one of them is an isolated vertex.
\end{enumerate}
\end{lemma}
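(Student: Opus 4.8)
The plan is to handle the two parts separately, beginning with part 2, which is short. Suppose $H$ has seven vertices, three components, and no isolated vertex. Then every component has at least two vertices, so the component sizes are forced to be $2,2,3$. A connected graph on two vertices is $K_2$, and a connected graph on three vertices is $K_3$ or $P_3$. Since $H$ has seven vertices it is not extremal, so by item 5 of Lemma \ref{kp-genres} every complete component is $K_1$ or $K_2$; hence the three-vertex component cannot be $K_3$ and must be $P_3$. Thus $H = 2K_2 + P_3$. This graph is $2$-polar: place $P_3 = K_{1,2}$ in $A$ as a complete bipartite graph and the two copies of $K_2$ as the two cliques of $B$. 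This contradicts $H$ being an obstruction, so $H$ must have an isolated vertex.

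For part 1 I would prove the equivalent statement that no minimal $2$-polar obstruction has at most six vertices. Let $H$ be such an obstruction. By item 1 of Lemma \ref{kp-genres} it has $m \le 4$ components, and by item 5 each complete component is $K_1$ or $K_2$ (as $H$ is not extremal). The case $m = 4$ is immediate from Lemma \ref{k+2comp}, which forces at least seven vertices. For $m \in \{1,2,3\}$ the argument rests on two facts: \emph{(i)} every graph on at most four vertices is both $(2,1)$-polar and $(1,2)$-polar, and \emph{(ii)} every graph on five vertices is $(2,1)$-polar or $(1,2)$-polar. Fact (i) follows by taking a maximum clique $Q$: if $G$ has an edge then $|Q| \ge 2$, so $G - Q$ has at most two vertices and is complete bipartite, giving a $(2,1)$-partition; the $(1,2)$ statement is the same assertion for $\overline{G}$, and the edgeless case is trivial. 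Fact (ii) is the one delicate point: if $\omega(G) \ge 3$ the clique argument again gives a $(2,1)$-partition; if $\alpha(G) \ge 3$ the complementary argument gives a $(1,2)$-partition; and otherwise $\omega(G), \alpha(G) \le 2$ on five vertices forces $G = C_5$ (since the Ramsey number $R(3,3)=6$), which is $(2,1)$-polar.

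Using (i) and (ii), each disconnected candidate collapses. When $H$ has a single non-complete component $N$ together with isolated vertices, fact (i) makes $N$ (or, if $N$ has five vertices, fact (ii)) $(1,2)$-polar, so the isolated vertices can be absorbed into the independent part of $A$; a non-complete component accompanied by a $K_2$ uses the $(2,1)$-partition of $N$ together with the extra clique; a graph $N + K_1$ is $2$-polar precisely because $N$ is $(2,1)$- or $(1,2)$-polar; and the few remaining small configurations, such as $2P_3$ and disjoint unions of cliques, are verified by hand. This disposes of every disconnected obstruction on at most six vertices.

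The remaining, and hardest, case is $m = 1$, where $H$ is connected. Here I would pass to the complement: $\overline{H}$ is again a minimal $2$-polar obstruction on the same number of vertices, so if $\overline{H}$ is disconnected we are done by the previous paragraph. The genuine obstacle is therefore the graphs for which both $H$ and $\overline{H}$ are connected; for these I would invoke the standard vertex-deletion argument described before Lemma \ref{kp-genres}, deleting a vertex $v$, taking a $2$-polar partition of the at-most-five-vertex graph $H - v$, and showing that so constrained a partition always admits the reinsertion of $v$, contradicting that $H$ is an obstruction. Checking that this reinsertion succeeds for every possible neighbourhood of $v$ is the main technical burden of the proof.
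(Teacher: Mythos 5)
Your part 2 is correct (ruling out $K_3$ via item 5 of Lemma \ref{kp-genres} and then exhibiting a partition of $2K_2+P_3$ works; the paper simply checks both $2K_2+P_3$ and $2K_2+K_3$ by hand), and your facts (i) and (ii), together with the component analysis and Lemma \ref{k+2comp}, do dispose of the disconnected candidates in part 1. The genuine gap is the last case of part 1. Part 1 is equivalent to the assertion that \emph{every} graph on at most six vertices is $2$-polar; your facts (i) and (ii) settle this up to five vertices, and your disconnected analysis settles the six-vertex graphs that are disconnected or have disconnected complement, but for the six-vertex graphs that are connected with connected complement --- e.g.\ $C_6$, its complement the prism, or $C_5$ with a pendant vertex, none of which is split --- you offer only the statement that you ``would invoke the standard vertex-deletion argument'' and that checking it ``is the main technical burden of the proof.'' That burden is precisely the content of the lemma in this case and it is never discharged. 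Note also that the reinsertion scheme is not a routine verification: for a putative minimal obstruction $H$, \emph{no} partition of any $H-v$ extends, so deriving a contradiction requires surveying all $2$-polar partitions of $H-v$ against all possible attachments of $v$, an enumeration whose termination your sketch does not indicate.

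The paper closes this case (and in fact all cases of part 1, connected or not, with no appeal to minimality, components, or Lemma \ref{kp-genres}) with one uniform observation based on the Foldes--Hammer theorem: if $H$ on at most six vertices is split, it is $2$-polar; otherwise $H$ contains an induced $2K_2$, $C_4$ or $C_5$, and the few remaining vertices can be placed directly. If $H$ contains an induced $C_4$, put the $C_4$ in the multipartite side and the at most two remaining vertices in the clique side, as one $K_2$ if adjacent or two $K_1$'s if not; the $2K_2$ case is the mirror image; if $H$ contains an induced $C_5$, the at most one remaining vertex $v$ either has two consecutive neighbours on the cycle, giving a partition into $K_3$ and $P_3$, or has two consecutive non-neighbours, giving a partition into $P_3$ and the two cliques $K_2$, $K_1$. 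Replacing your entire part 1 by this argument --- or at least completing your connected co-connected case along these lines --- is what is needed to make the proof whole.
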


\begin{proof}
Let $H$ be a graph on at most $6$ vertices.
If $H$ is a split graph, then it is $2$-polar.
So, suppose that $H$ contains one of the
minimal split obstructions as an induced
subgraph.   If $H$ contains an induced
$C_5$ and, if (provided it exists) the
remaining vertex is adjacent to two of its
consecutive vertices, then we can find a
$(2,1)$-polar partition of $H$, consisting of
a $P_3$ and a $K_3$. On the other hand, if
the remaining vertex is non-adjacent to two
of its consecutive vertices, we can also find a
$2$-polar partition consisting of a $P_3$, a
$K_1$ and a $K_2$.  Now suppose that $H$
contains an induced $C_4$ and the
remaining two vertices, if they exist, are
mutually adjacent. Then we can find a
$(2,1)$-polar partition consisting in the
$C_4$ and a $K_2$. On the other hand, if
the two remaining vertices are non-adjacent,
we can find a $2$-polar partition consisting
of the $C_4$ and $2K_1$. The case when
$H$ contains an induced $2K_2$ is
analogous to the previous one.

For the second statement, let $H$ be a graph
on $7$ vertices with three connected components
and without isolated vertices.   It is easy to observe
that two components of $H$ are $K_2$ and the
remaining one is either $P_3$ or $K_3$.   In
either case, it is immediate to verify that $H$
admits a $2$-polar partition.
\end{proof}

\begin{figure}
\begin{center}
\includegraphics[]{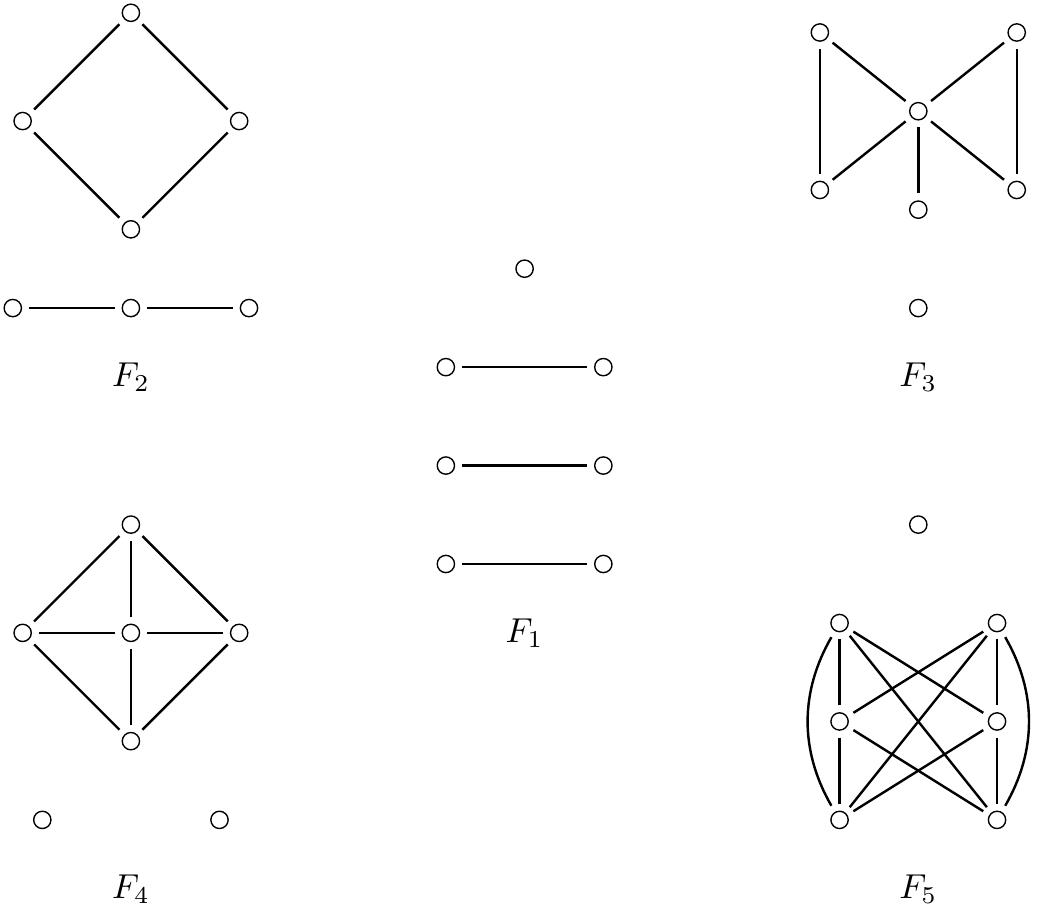}
\caption{Cograph minimal $2$-polar obstructions on $7$ vertices.} \label{(2,2)-obs7}
\end{center}
\end{figure}

\begin{lemma} \label{2-obs7}
The disconnected cograph minimal
$2$-polar obstructions on $7$ vertices are
exactly $F_1, \dots, F_5$, see Figure
\ref{(2,2)-obs7}.
\end{lemma}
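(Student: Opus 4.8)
The goal is to determine \emph{all} disconnected cograph minimal $2$-polar obstructions on exactly seven vertices. By Lemma~\ref{7v-lemma}, seven is the minimum possible number of vertices for any minimal $2$-polar obstruction, so these are the smallest obstructions and a careful case analysis should be tractable. The plan is to organize the search by the number of connected components, which by Lemma~\ref{kp-genres}(1) is at most $k+2=4$, and since $H$ is disconnected it is at least $2$. For each possible number of components I would pin down the possible isomorphism types of the components using the structural constraints already established, and then test $2$-polarity directly.

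\emph{Organizing the case split.}
First I would handle the case of four components. Since $H$ has only seven vertices and at least one nontrivial component (Lemma~\ref{kp-genres}(2)), the four components must be small; combined with Lemma~\ref{k+2comp} (which with $k=2$ forces at least one trivial component among graphs with $k+2=4$ components and classifies them exactly), this case should collapse to the single graph $\ell K_1 + (3-\ell)K_2 + K_{\ell,\ell}$ for appropriate $\ell$, yielding one or two of the $F_i$. Next, the three-component case: by Lemma~\ref{7v-lemma}(2), at least one component is an isolated vertex, so Lemma~\ref{kp-genres}(4) applies and at most one component is non-complete. This severely restricts the component multiset (isolated vertex plus two further components whose sizes sum to six, at most one non-complete, and by Lemma~\ref{kp-genres}(5) every complete component is $K_1$ or $K_2$ since these are not extremal). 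The surviving candidates should be few, and each is checked by attempting a $2$-polar partition. Finally, the two-component case requires the most bookkeeping: write $H=B_1+B_2$ with $|B_1|+|B_2|=7$, use the preliminary argument preceding Lemma~\ref{kp-genres} (that the multipartite side lives in a single component once two stable parts are nonempty) to constrain how a $2$-polar partition of $H-v$ must look, and derive the exact structure of $B_1,B_2$ that makes $H$ an obstruction.

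\emph{Verifying minimality and completeness.}
For every candidate $H$ that survives the structural sieve, I would verify two things: that $H$ is genuinely not $2$-polar (by ruling out every distribution of its vertices into a complete multipartite part with at most two parts and a disjoint union of at most two cliques), and that $H$ is minimal (every proper induced subgraph \emph{is} $2$-polar, which by heredity reduces to checking that deleting one vertex from each orbit yields a $2$-polar graph). To guarantee completeness of the list I would cross-check against the containment observation preceding Figure~\ref{(2,1)-obs}: each minimal $2$-polar obstruction must contain a split obstruction ($2K_2$ or $C_4$) together with a minimal $(2,1)$- or $(1,2)$-polar cograph obstruction, so any genuine obstruction I construct should exhibit such subgraphs, and conversely this constraint helps confirm I have not missed a case.

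\emph{Main obstacle.}
I expect the principal difficulty to be the two-component case, where neither Lemma~\ref{kp-genres}(4) (no isolated vertex is guaranteed) nor the clean classification of Lemma~\ref{k+2comp} applies directly. Here I cannot simply enumerate small component types; instead I must reason about how the complete-multipartite part and the two-clique part of a hypothetical partition interact across the two components, and the components themselves may be moderately large (e.g.\ sizes $3$ and $4$, or $2$ and $5$) with several non-isomorphic cograph structures each. Controlling this enumeration without either missing a graph or admitting a spurious one—while simultaneously confirming minimality—is the delicate part of the argument.
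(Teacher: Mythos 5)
Your plan is organized correctly at the top level (the four-component case via Lemma~\ref{k+2comp} matches the paper, and your three-component sieve via Lemmas~\ref{7v-lemma} and~\ref{kp-genres} would work), but there is a genuine gap: the two-component case, which you yourself flag as the main obstacle, is never actually resolved, and it is precisely where most of the answer lives --- three of the five graphs ($F_2 = C_4 + P_3$, $F_3$, and $F_5 = \overline{3K_2} + K_1$) have exactly two components. Your structural sieve is weak there: Lemma~\ref{kp-genres}(4) requires a trivial component, so for candidates like $P_3 + C_4$ it gives nothing, and you are left enumerating pairs of connected cographs whose orders sum to seven --- in particular every connected non-complete $6$-vertex cograph paired with $K_1$ --- with only the containment observation as a pruning heuristic. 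A plan that ends with ``controlling this enumeration is the delicate part'' has not discharged that case, and no concrete mechanism for doing so is offered.

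The idea you are missing is the one the paper's proof is built on: partial complementation (Lemmas~\ref{switch-pc} and~\ref{closure}). Since partial complementation preserves the property of being a disconnected cograph minimal $2$-polar obstruction, and the family $\{F_1,\dots,F_5\}$ is closed under it, the paper never analyzes the two- and three-component cases separately. Either $H$ can be carried by partial complementations to a graph with four components --- which Lemma~\ref{k+2comp} forces to be $F_1$, so $H \in \{F_1,\dots,F_5\}$ by closure --- or it cannot; in the latter case the paper normalizes $H$: a two-component graph can always be pushed up to at least three components by a partial complementation, a three-component minimal obstruction on seven vertices has an isolated vertex by Lemma~\ref{7v-lemma}(2), and one further partial complementation yields two components, one of which is an isolated vertex. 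Only then does the case analysis run, and its engine is the list $E_1,\dots,E_9$ of cograph minimal $(2,1)$-polar obstructions from Figure~\ref{(2,1)-obs}: the normalized $H$ must contain some $E_i$ with $i \in \{1,3,4,5,6,7\}$, and each case is settled in a few lines using $P_4$-freeness. In your plan that containment fact appears only as a ``cross-check''; in the paper it is the main tool, and partial complementation is exactly what makes the normalized form simple enough for six short cases to finish the proof.
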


\begin{proof}
Let $H$ be a cograph minimal $2$-polar
obstruction on $7$ vertices.   If $H$ has four
components, then, according to Lemma
\ref{k+2comp}, it must be $F_1$.

It follows from Lemma \ref{closure} that if
$H$ can be transformed into a graph
with four components through a sequence
of partial complementations, then it is one
of $F_i$ for $i \in \{ 1, \dots, 5 \}$.   So, let
us suppose that none of the graphs that
can be obtained from $H$ by partial
complementations has more than three
components.  Notice that any graph with
two components can be transformed into
a graph with at least three components
using partial complementation.   Thus, let
us suppose without loss of generality that
$H$ itself has three components $B_1,
B_2, B_3$.   Then, by Lemma \ref{7v-lemma},
$H$ has an isolated vertex. Let us suppose
that $B_3$ is the trivial component of $H$.
By taking the partial complementation
$\overline{B_3} + \overline{B_1 + B_2}$,
we obtain a graph with two components,
one of them being  an isolated vertex.
Again, let us suppose that $H$ is such
graph.

It is clear that $H$ contains an induced
copy of $E_i$ for some $i \in \{ 1, \dots,
9 \}$ (see Figure \ref{(2,1)-obs}).   Since
$H$ has two components, one of which
is an isolated vertex, $i \notin \{ 2, 8, 9 \}$.
If $i = 4$, then $G$ is $F_5$.   If $i = 5$,
then $H$ is $(1,2)$-polar: take the middle
non-adjacent vertices of $E_5$ together
with the isolated vertex in a stable set, and
a $2K_2$.

If $i = 3$, since $H$ has only two
components and $E_3$ has three
components, then the  vertex of $H$
which is not in the copy of $E_3$
should be adjacent to one of the
isolated vertices of $E_3$.
The resulting $K_2$, together with
the isolated vertex and the $C_4$
contained in $E_3$ conform a
$2$-polar partition of $H$,
contradicting the assumption that it is an
obstruction.

If $i = 6$, again, since $H$ has
two components, the vertex of
$H$, let us say, $x$, not in $E_6$,
should be adjacent to the
$4$-wheel contained in $E_6$.
If $x$ is adjacent to the middle
vertex of the wheel, let us say $y$,
then the resulting $K_2$, together
with the isolated vertex of $H$ and
the $C_4$ contained in $E_6$,
conform a $2$-polar partition of
$H$, a contradiction. So, $x$ must
be adjacent to some vertex in the
$C_4$, let us say $w_1$, and thus,
in order to not contain an induced
$P_4$, it should be adjacent to
a pair of antipodal vertices.  If $x$
is only adjacent to a pair of
antipodal vertices, then $H$ admits
a $2$-polar partition, which is a
$K_{2,3}$ and a $2K_1$. Else, if
$x$ is adjacent all the vertices of
the $C_4$ but one, let us say
$w_2$, then $xw_1yw_2$ is an
induced $P_4$, a contradiction.
Then, $x$ is adjacent to every
vertex of the $C_4$ and so, $H$
is $F_5$.

If $i = 7$, as in the previous case,
the vertex of $H$ not in $E_7$,
let us say $x$, cannot be adjacent
to the vertex in the center of the
$C_4$, let us say, $y$.   Also, we
have a case similar to the previous
one when $x$ is adjacent to only
two antipodal vertices.  So $x$ is
adjacent to at least three vertices
of the $C_4$.   Hence, $x$ and $y$
have at least two common
neighbors in the $C_4$.   Let us call
$w_1$ the non-neighbor of $y$ in
the $C_4$. If $x$ is adjacent to $y$,
then $yw_2w_1x$, where $w_2$ is
any common neighbor of $x$ and
$y$, is an induced $P_4$, 
contradiction. So, $x$ and $y$ have
the same set of neighbours in the
$C_4$. Therefore, $H$ admits a
$2$-polar partition consisting of a $P_3$,
a $K_1$ and a $K_3$, a contradiction.

Finally, if $i = 1$, there are two
new vertices besides the vertices
from $E_1$, say $u$ and $v$.
Since $G$ has two connected
components, and recalling that
there are not induced copies of
$P_4$ in $G$, it can be observed
that one of these two vertices, say
$v$, is completely adjacent to the
$2K_2$ in $E_1$.   If $u$ is only
adjacent to $v$, then $G$ is $F_3$.
Otherwise, it follows from the fact
that $G$ is a cograph that $u$
should be adjacent to $v$ and the
two vertices of one of the $K_2$.
But these four vertices induce a
$K_4$, which together with the
isolated vertex and the remaining
$K_2$, conform a $2$-polar
partition of $G$, a contradiction.

Since the cases are exhaustive,
the result follows.
\end{proof}

Although it may look a bit odd, we
will deal with the cograph minimal
$2$-polar obstructions on $9$ vertices
before dealing with the ones on $8$
vertices.  This is because we will use
the same proof strategy for both cases,
which is easier to explain in the case of
nine vertices.   We consider
$H$ a cograph minimal $2$-polar
obstruction.   As in the proof of Lemma
\ref{2-obs7}, we may assume that $H$ has
three components, one of which is an
isolated vertex $v$.   From the minimality
of $H$, $H-v$ has a $2$-polar partition
$P$.   Analyzing the cases for the parts
of $P$, it can be proved that one of the
remaining components of $H$ is a clique,
and the other one is a $(2,1)$-polar graph
which is not a split graph.   Until now, we
have that one component contains an
induced copy of either $2K_2$ or $C_4$,
and there is at least one vertex in each of
the remaining components of $H$, i.e.,
six vertices are completely determined.
The rest of the proof is an analysis of
cases for the remaining vertices.   Since
in the case when $H$ has nine vertices
there are three remaining vertices, it has
a more complex analysis, which actually,
``includes'' the case where there are only
to vertices remaining.

\begin{figure}
\begin{center}
\includegraphics[]{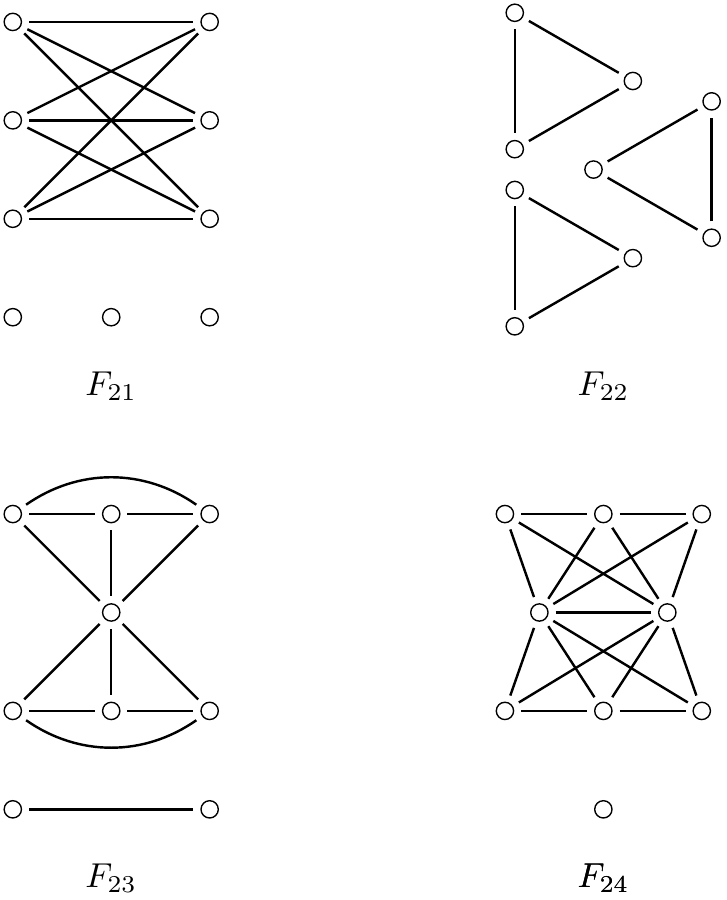}
\caption{Cograph minimal $2$-polar obstructions on $9$ vertices.} \label{(2,2)-obs9}
\end{center}
\end{figure}

\begin{lemma} \label{2-obs9}
The disconnected cograph minimal
$2$-polar obstructions on $9$ vertices are
exactly $F_{21}, \dots, F_{24}$, see Figure
\ref{(2,2)-obs9}.
\end{lemma}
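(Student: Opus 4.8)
The plan is to follow the strategy explicitly outlined in the paragraph preceding the statement, specializing the nine-vertex case of the general case-analysis machinery already developed for Lemma~\ref{2-obs7}. By Theorem~\ref{upper}, a cograph minimal $2$-polar obstruction has at most nine vertices, so I am working at the extremal bound. Since the class of cograph minimal $2$-polar obstructions is closed under complementation and under partial complementation (Lemma~\ref{switch-pc} and Lemma~\ref{closure}), and since any two-component graph can be turned into a graph with at least three components by partial complementation, I may assume $H$ is disconnected with exactly three components. First I would invoke Lemma~\ref{7v-lemma}(2)---or rather its natural nine-vertex analogue combined with Lemma~\ref{kp-genres}---to force one component to be an isolated vertex $v$. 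This is the normalization step that makes the rest of the analysis tractable.

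Next I would exploit minimality: $H - v$ admits a $2$-polar partition $P = (V_1,V_2,V_3,V_4)$, where $V_1 \cup V_2$ induces a complete bipartite graph and $V_3, V_4$ are cliques. The key structural claim to establish, exactly as asserted in the roadmap, is that one of the two remaining components of $H$ must be a clique and the other must be a $(2,1)$-polar graph that is \emph{not} a split graph. To see the latter, I would use the observation stated just before Figure~\ref{(2,1)-obs}: every minimal $2$-polar obstruction must contain a minimal $(2,1)$-polar obstruction $E_i$ (for some $i \in \{1,\dots,9\}$ from Figure~\ref{(2,1)-obs}) as well as a split obstruction ($2K_2$ or $C_4$). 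The presence of the isolated vertex together with the disconnectedness constraint rules out those $E_i$ that are themselves connected in an incompatible way (paralleling the elimination of $i \in \{2,8,9\}$ in the proof of Lemma~\ref{2-obs7}). This pins down six vertices: one nontrivial component carrying an induced $2K_2$ or $C_4$, at least one vertex in the clique component, and the isolated vertex.

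The remaining work is the exhaustive analysis of the three still-undetermined vertices. For each admissible base obstruction $E_i$, I would consider how the extra vertices attach, repeatedly using two constraints in tandem: the graph must remain $P_4$-free (so adjacencies must respect the cograph structure---typically forcing a new vertex to be either completely adjacent or completely non-adjacent to an already-understood module, as in the $F_3$ and $F_5$ derivations for Lemma~\ref{2-obs7}), and the graph must remain an \emph{obstruction} (so no placement may permit a $2$-polar partition, which I verify by exhibiting the forbidden partition whenever a placement would allow one). Every time a configuration yields a valid $2$-polar partition---commonly of the shape ``a $K_{2,3}$ or $C_4$ on the $A$-side plus two cliques,'' or ``a $P_3$ plus two cliques''---that branch is discarded. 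The surviving configurations should be precisely $F_{21}, F_{22}, F_{23}, F_{24}$ and their complements, and I would finally check that each is indeed a cograph, is $2$-polar-free, and is minimal.

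The hard part will be the bookkeeping in the final case analysis: with three free vertices rather than two, the number of attachment patterns grows substantially, and the $P_4$-freeness condition interacts delicately with the obstruction condition, so care is needed to ensure no case is missed and no surviving graph is secretly $2$-polar or secretly non-minimal. The roadmap's remark that the nine-vertex analysis ``includes'' the eight-vertex one suggests organizing the cases so that collapsing one free vertex (e.g., making it an isolated vertex or identifying it with a module) recovers the shorter argument, which I would use both to structure the proof and as an internal consistency check against Lemma~\ref{2-obs8A}/\ref{2-obs8B}.
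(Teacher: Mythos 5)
There is a genuine gap, and it sits exactly at your normalization step. The paper's proof is a dichotomy: \emph{first} it asks whether $H$ can be transformed by a sequence of partial complementations into a graph with \emph{four} components; in that case Lemma~\ref{closure} (with Lemma~\ref{k+2comp}, which identifies $K_{3,3}+3K_1$ as the unique nine-vertex obstruction with four components) immediately yields that $H$ is one of $F_{21},\dots,F_{24}$ --- and this branch is where \emph{all four} obstructions come from. Only in the complementary case does the paper normalize to three components, one of which is an isolated vertex, and in that case every branch of the subsequent analysis ends in a contradiction (containment of a smaller obstruction such as $F_1$, $F_4$ or $F_7$, or an explicit $2$-polar partition): no obstruction survives there. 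Your proposal drops the four-component branch and instead asserts that one may ``force one component to be an isolated vertex'' via a nine-vertex analogue of Lemma~\ref{7v-lemma}(2). That analogue is false: $F_{22}=3K_3$ (the extremal obstruction $(k+1)K_{k+1}$ of Lemma~\ref{kp-genres} for $k=2$) is a nine-vertex minimal obstruction with three components and no isolated vertex. It is handled in the paper only because it \emph{can} be partially complemented into the four-component graph $K_{3,3}+3K_1$, i.e., it lives in the very case you discarded. Worse, none of the four target graphs fits your normalized form ($F_{21}$ has four components, $F_{22}$ has no trivial component, $F_{23}$ and $F_{24}$ have two components), so your expectation that the final case analysis will have $F_{21},\dots,F_{24}$ as its ``surviving configurations'' cannot be realized: carried out correctly, that analysis produces nothing, and the theorem's content must come from the closure argument applied to the four-component case.

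A secondary, repairable divergence: you propose to establish the key structural claim (one nontrivial component is a clique, the other is $(2,1)$-polar but not split) by invoking the list $E_1,\dots,E_9$ of $(2,1)$-polar obstructions, which is the strategy of Lemma~\ref{2-obs7}. The paper instead reads this off directly from a $2$-polar partition $(V_1,V_2,V_3,V_4)$ of $H-v$: minimality forces $V_1,V_2\neq\varnothing$, hence $V_1\cup V_2$ lies in one component $B_1$; the other component $B_2$ must then equal a clique part $V_3$; and $V_4\neq\varnothing$ forces $B_1$ to be non-split. This direct argument is what makes the ensuing case analysis (on $|V_3|$) tractable; if you route through the $E_i$'s you will essentially be redoing Lemma~\ref{2-obs7} with an extra vertex, without the clique-versus-non-split dichotomy that organizes the paper's cases.
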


\begin{proof}
Let $H$ be a disconnected cograph
minimal $2$-polar obstruction on $9$
vertices.   If $H$ can be transformed
by means of partial complementation
into a graph with four components,
then it follows from Lemma \ref{closure}
that $H$ is one of $F_{21}, \dots,
F_{24}$.   Otherwise, notice that we
can obtain from $H$, through a
sequence of partial complementations,
a graph with three components, one
of which is an isolated vertex.  Thus,
we may assume that $H$ has three
components $B_1, B_2, B_3$, and
$B_3$ is an isolated vertex.

Let $v$ be the isolated vertex of $H$.
It follows from the minimality of $H$
that $H-v$ has a $2$-polar partition
$P = (V_1, V_2, V_3, V_4)$. Notice
that $V_2 \ne \varnothing$, else,
$(V_1 \cup \{ v \}, V_2, V_3, V_4)$ is
a  $2$-polar partition of $H$.
Analogously, $V_1 \ne \varnothing$.
Thus, $H [V_1 \cup V_2]$ is
connected, and it should be contained
in one of the two non-trivial components
of $H$, say, $B_1$.   Thus, $B_2$ is
covered by one of the cliques of $P$,
without loss of generality suppose
that $V_3 = V(B_2)$.   Note that $V_4
\ne \varnothing$, otherwise $(V_1,
V_2, V_3, \{ v \})$ is a $2$-polar
partition of $H$.   Hence, $B_1$ is
a $(2,1)$-polar graph, which is not
a split graph, because $V_1$ and
$V_2$ are both non empty.

Suppose first that $|V_3| \ge 2$.
Since $B_1$ is not a split graph, it
should contain an induced copy of
$2K_2$ or an induced copy of $C_4$.
The former case cannot occur, since
such copy of $2K_2$ together with
two vertices in $V_3$ and the vertex
$v$ would induce a copy of $F_1$,
contradicting the minimality of $H$.
For the latter case, notice that $B_1$
has at least five vertices, because
$V_4 \ne \varnothing$.   Let $u$ be
the fifth vertex of $B_1$ (not in
$C_4$). Since $G$ is a cograph,
$u$ should be adjacent to two
antipodal vertices, three vertices,
or four vertices in $C_4$.   If it is
adjacent to three or four vertices,
then $H$ contains $F_7$ or $F_4$
as an induced subgraph,
respectively.   In the remaining
case, if $|V_3| = 3$, then $B_1$ is
complete bipartite, and $G$ admits
a $2$-polar partition.   If $|V_3| = 2$,
then there is an additional vertex
$u'$ in $B_1$. By the same
argument as above, $u'$ should
be adjacent to two antipodal
vertices of $C_4$.   If $u$ and $u'$
are adjacent to the same pair of
vertices in $C_4$, and $u$ is not
adjacent to $u'$, then $B_1$ is
again a complete bipartite graph.
If $u u' \in E(G)$, then $H$
contains an induced copy of
$F_7$.   Thus, $u$ and $u'$ should
be adjacent to different pairs of
vertices in $C_4$. Again, in order
for $H$ to be a cograph we need
$u$ to be adjacent to $u'$.   But
now, $B_1$ is isomorphic to $K_{3,3}$.

Consider now the case $|V_3| = 1$.
Since $B_1$ is a connected cograph,
it should be a join of two smaller
cographs $T_1$ and $T_2$.   If $T_i$
is a complete graph on at least two
vertices for some $1 \le i \le 2$, then
$\overline{B_1} + \overline{B_2+
B_3}$ has at least four components,
contradicting the choice of $H$.   Thus,
either $T_1$ and $T_2$ both contain
an induced $P_3$, or we assume
without loss of generality that $T_1$
consists of a single vertex.   In the
former case, we may assume without
loss of generality that $T_1$ is
isomorphic to $P_3$, and thus,
$\overline{B_1} + \overline{B_2
+ B_3}$ has at least four components.
In the latter case, $\overline{B_1}
+ \overline{B_2+ B_3}$ has three
components, one of them isomorphic
to $K_2$, and one of them an isolated
vertex, so we are in the case $|V_3| =
2$.
\end{proof}

\begin{figure}
\begin{center}
\includegraphics[scale=1]{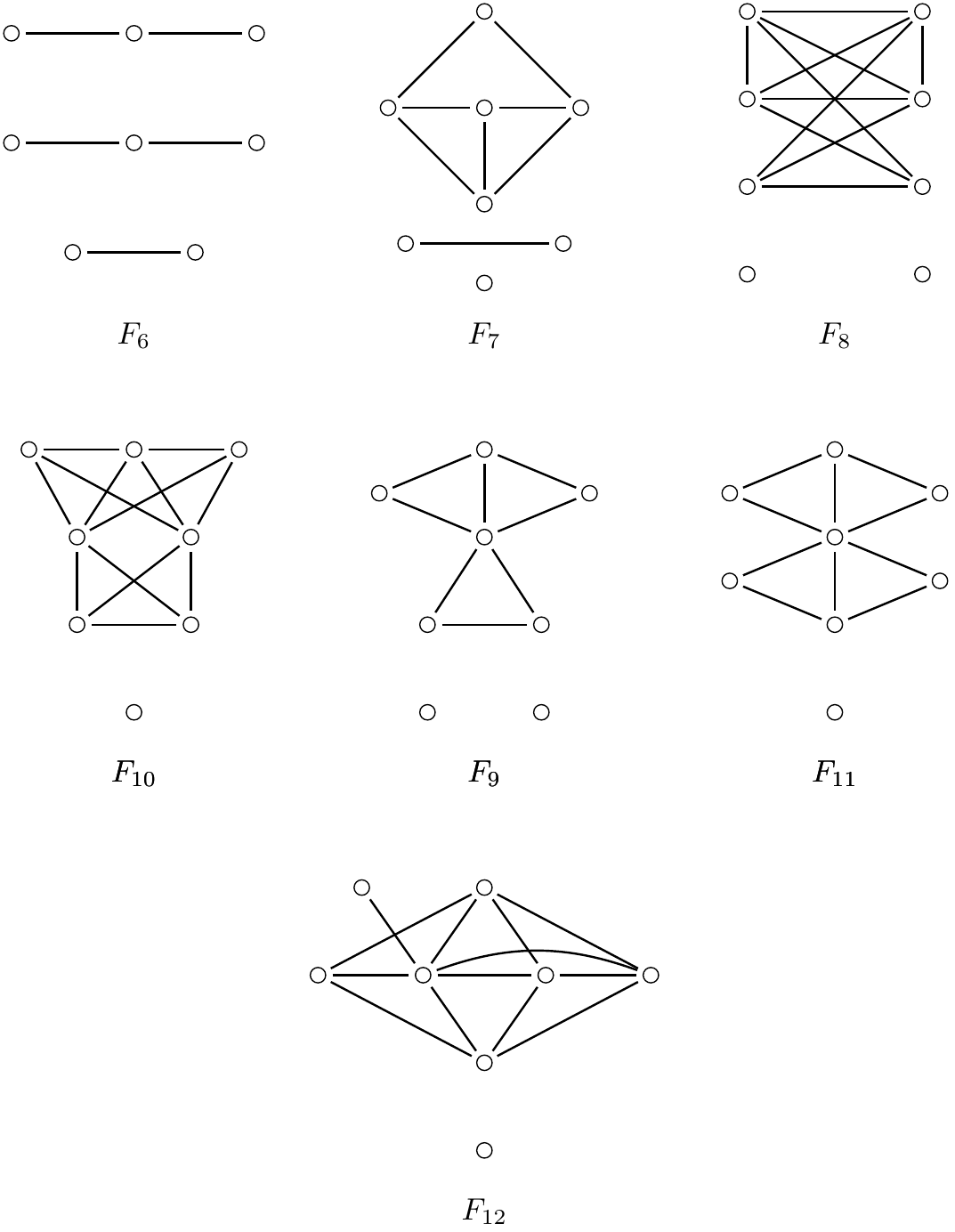}
\caption{Family A of cograph minimal $2$-polar obstructions on $8$ vertices.} \label{(2,2)-obs8A}
\end{center}
\end{figure}

\begin{figure}
\begin{center}
\includegraphics[scale=1]{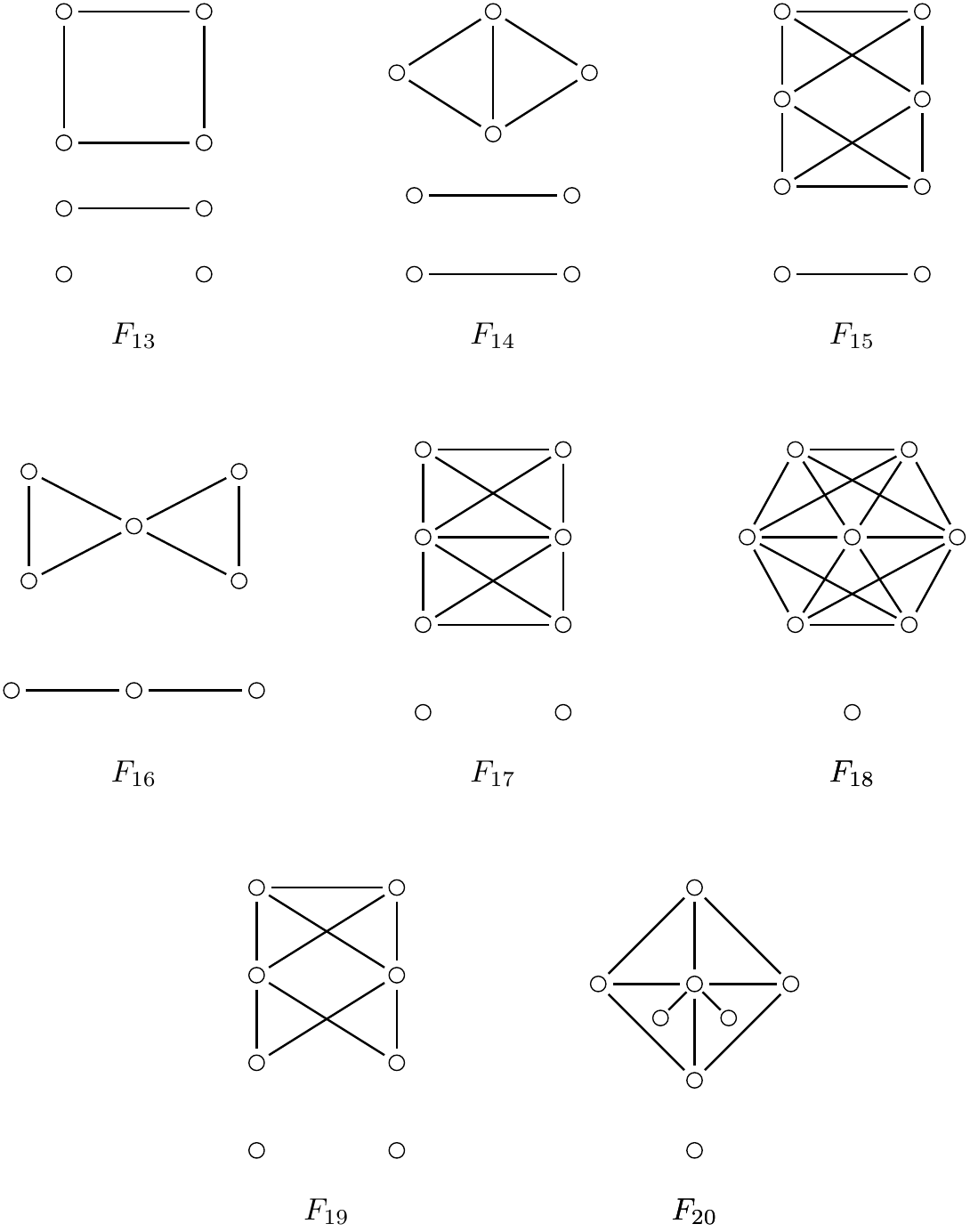}
\caption{Family B of cograph minimal $2$-polar obstructions on $8$ vertices.} \label{(2,2)-obs8B}
\end{center}
\end{figure}

\begin{lemma} \label{2-obs8}
The disconnected cograph minimal
$2$-polar obstructions on $8$ vertices are
exactly $F_6, \dots, F_{20}$, see Figures
\ref{(2,2)-obs8A} and \ref{(2,2)-obs8B}.
\end{lemma}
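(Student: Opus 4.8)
The plan is to run, almost verbatim, the strategy already set up for Lemma \ref{2-obs9}, taking advantage of the remark there that the nine-vertex analysis ``includes'' the eight-vertex one. First I would dispose of every $H$ that is reducible to four components: if some graph in the partial-complementation class of $H$ has four components, then Lemma \ref{closure} immediately places $H$ in $\mathcal{H}_{8A}$ or $\mathcal{H}_{8B}$ and nothing more is needed. By Lemma \ref{k+2comp}, the unique four-component minimal $2$-polar obstruction on eight vertices is $2K_1 + K_2 + K_{2,2}$ (the case $\ell = 2$), so this entire branch is governed by a single partial-complementation orbit. For the rest I would assume, exactly as in Lemma \ref{2-obs9}, that no graph in the class of $H$ reaches four components; since any two-component graph can be pushed to at least three components by a partial complement, I may take $H$ to have three components $B_1, B_2, B_3$, with $B_3$ an isolated vertex $v$.

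Next I would extract the structural skeleton. Letting $P = (V_1, V_2, V_3, V_4)$ be a $2$-polar partition of $H - v$ (which exists by minimality), the arguments of Lemma \ref{2-obs9} give $V_1, V_2 \neq \varnothing$, so $H[V_1 \cup V_2]$ is connected and lies inside one nontrivial component $B_1$; the other nontrivial component $B_2$ is then covered by a single clique, say $V_3 = V(B_2)$, and $V_4 \neq \varnothing$. Hence $B_1$ is $(2,1)$-polar but not split, so it contains an induced $C_4$ or an induced $2K_2$. This is where eight vertices make the configuration rigid: once $v$, the clique $B_2$, and a copy of $C_4$ or $2K_2$ inside $B_1$ are accounted for, only two vertices remain to be placed.

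The core of the argument is then the case analysis on $|V_3|$, run under the cograph constraint that no induced $P_4$ may appear. When $|V_3| \ge 2$, a $2K_2$ inside $B_1$ together with two vertices of $B_2$ and $v$ would induce $F_1$, a seven-vertex obstruction and thus a proper induced subgraph, a contradiction; so $B_1 \supseteq C_4$, and $|B_1| + |V_3| = 7$ with $|B_1| \ge 4$, $|V_3| \ge 2$ forces $(|B_1|, |V_3|) \in \{(4,3), (5,2)\}$. In the first situation $B_1 = C_4$ and $H = C_4 + K_3 + K_1$ is $2$-polar. In the second, $B_1$ is a five-vertex cograph containing $C_4$, so up to the symmetries of the cycle its fifth vertex is joined to two antipodal, to three, or to all four cycle vertices (any other attachment either disconnects $B_1$ or creates an induced $P_4$), giving $B_1 \in \{K_{2,3},\, 2K_1 \oplus (K_2 + K_1),\, W_4\}$. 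The first gives the $2$-polar graph $K_{2,3} + K_2 + K_1$; the third gives $W_4 + K_2 + K_1$, which properly contains $F_4 = 2K_1 + W_4$ and is discarded; the middle one is a genuine minimal obstruction, namely $F_7$. This last step is precisely the one departure from Lemma \ref{2-obs9}: there, an induced copy of the eight-vertex obstruction $F_7$ yields a contradiction, whereas here it pins $H$ down as $F_7$ itself. The remaining case $|V_3| = 1$ I would treat by the join decomposition $B_1 = T_1 \oplus T_2$ exactly as in Lemma \ref{2-obs9}, using partial complementation to send it either into the four-component branch or back to the $(|B_1|,|V_3|) = (5,2)$ subcase.

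I expect the real obstacle to be bookkeeping rather than any single idea: guaranteeing that the placement of the two leftover vertices is enumerated exhaustively, and that each surviving graph is matched to its correct entry in Figures \ref{(2,2)-obs8A} and \ref{(2,2)-obs8B}. Lemma \ref{closure} is what keeps this manageable, since once one representative of a partial-complementation orbit is identified the whole orbit is known to lie in the families and need not be re-derived by hand; it therefore suffices to locate the few seeds --- one from the four-component branch, $2K_1 + K_2 + K_{2,2}$, and one, $F_7$, from the $(5,2)$ subcase --- and let closure generate the rest of $F_6, \dots, F_{20}$.
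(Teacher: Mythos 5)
Your proposal takes essentially the same approach as the paper: the paper's proof likewise splits into the branch where $H$ reduces by partial complementation to four components (settled by Lemmas \ref{k+2comp} and \ref{closure}) and the branch where the argument of Lemma \ref{2-obs9} is rerun to transform $H$ into $F_7$. The paper's own proof is only a two-sentence sketch deferring to that ``analogous argument,'' so your write-up supplies precisely the details it leaves implicit --- in particular the $(|B_1|,|V_3|)$ case analysis whose surviving case pins $H$ down as $F_7$ rather than yielding a contradiction.
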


\begin{proof}
Let $H$ be a disconnected cograph
minimal $2$-polar obstruction on $8$
vertices.   If $H$ can be transformed
by means of partial complementation
into a graph with four components,
then $H$ is one of $F_{13}, \dots,
F_{20}$.

Otherwise, an argument analogous
to the one used in Lemma \ref{2-obs9}
shows that $H$ can be transformed
through a sequence of partial
complementations into $F_7$ and
hence it is one of $F_6, \dots, F_{12}$.
\end{proof}

We are now ready to state our two main results.

\begin{theorem} \label{char}
There are exactly $48$ cograph minimal
$2$-polar obstructions.   All the disconnected
cograph minimal $2$-polar obstructions are
$F_1, \dots, F_{24}$, see Figures \ref{(2,2)-obs7},
\ref{(2,2)-obs9}, \ref{(2,2)-obs8A}, and
\ref{(2,2)-obs8B}.
\end{theorem}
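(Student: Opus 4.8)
The plan is to combine the three classification lemmas for disconnected obstructions with the vertex-count bounds, and then to obtain the total count through complementation. I would organize the argument in two stages: first establishing the second assertion (the complete list of disconnected obstructions), and then deducing the exact count of $48$.

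For the first stage, let $H$ be a disconnected cograph minimal $2$-polar obstruction. By Theorem~\ref{upper} applied with $s=k=2$, $H$ has at most $(2+1)(2+1)=9$ vertices, and by part~1 of Lemma~\ref{7v-lemma} it has at least $7$ vertices. Hence $H$ has exactly $7$, $8$, or $9$ vertices, and Lemmas~\ref{2-obs7}, \ref{2-obs8}, and~\ref{2-obs9} identify these, respectively, as $F_1,\dots,F_5$, as $F_6,\dots,F_{20}$, and as $F_{21},\dots,F_{24}$. This yields exactly $5+15+4=24$ disconnected obstructions and proves the second sentence of the theorem.

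For the count, I would show that ordinary complementation is a bijection between the disconnected and the connected obstructions. As noted in the introduction, $H$ is a cograph minimal $2$-polar obstruction if and only if $\overline{H}$ is. Every such $H$ has at least seven vertices, hence is non-trivial, so by the recursive characterization of cographs at least one of $H$ and $\overline{H}$ is disconnected; since a graph and its complement can never both be disconnected, in fact \emph{exactly} one of them is (in particular, no such obstruction is self-complementary). Thus $H\mapsto\overline{H}$ sends each of the $24$ disconnected obstructions $F_1,\dots,F_{24}$ to a connected obstruction $\overline{F_i}$; these $24$ complements are pairwise distinct (complementation is injective) and distinct from every $F_j$ (being connected), and every connected obstruction $H$ arises this way, since $\overline{H}$ is a disconnected obstruction, hence some $F_i$, forcing $H=\overline{F_i}$. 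Therefore there are exactly $24$ connected obstructions, and $24+24=48$ in total.

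Since the substantive case analysis is all carried out inside Lemmas~\ref{2-obs7}--\ref{2-obs9}, at this level the only point demanding care is the dichotomy that exactly one of $H$ and $\overline{H}$ is disconnected. Here one must invoke both the elementary fact that a graph and its complement are never simultaneously disconnected and the cograph structure theorem guaranteeing that at least one of them is disconnected, which together rule out a non-trivial self-complementary cograph. Once this is in place, the bijection, and hence the number $48$, follows immediately.
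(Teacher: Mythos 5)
Your proposal is correct and follows essentially the same route as the paper: the paper's proof simply cites Lemmas \ref{2-obs7}, \ref{2-obs9}, and \ref{2-obs8}, leaving implicit exactly the details you spell out (the $7$--$9$ vertex range from Theorem \ref{upper} and Lemma \ref{7v-lemma}, and the complementation bijection between disconnected and connected obstructions sketched in the introduction). Your explicit treatment of the dichotomy that exactly one of $H$ and $\overline{H}$ is disconnected, which rules out self-complementary obstructions and double counting, is a careful filling-in of what the paper takes for granted.
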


\begin{proof}
The result follows directly from Lemmas \ref{2-obs7},
\ref{2-obs9}, and \ref{2-obs8}.
\end{proof}

\begin{theorem} \label{reducedObs}
All cograph minimal $2$-polar obstructions are $F_1,
F_6, F_{13}, F_{21}$ and every graph obtained from
these by partial complementation.
\end{theorem}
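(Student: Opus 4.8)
The plan is to read off the result by combining the full enumeration in Theorem \ref{char} with the closure statement of Lemma \ref{closure}, exploiting the fact that partial complementation is an equivalence relation on the set of cograph minimal $2$-polar obstructions whose classes I want to pin down. Concretely, I aim to exhibit four classes, represented by $F_1$, $F_6$, $F_{13}$ and $F_{21}$, and to show that every obstruction lies in one of them.

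First I would invoke Theorem \ref{char}: the cograph minimal $2$-polar obstructions are precisely the disconnected graphs $F_1,\dots,F_{24}$ together with their complements $\overline{F_1},\dots,\overline{F_{24}}$. These $48$ graphs are pairwise distinct, since each $F_i$ is disconnected and hence each $\overline{F_i}$ is connected, so no $\overline{F_i}$ can coincide with any $F_j$. The crucial structural remark is that taking the full complement is itself a degenerate partial complement, namely the case in which one of $H'$, $H''$ is empty, as noted just after the definition of partial complementation. Thus each connected obstruction $\overline{F_i}$ is obtained from the disconnected obstruction $F_i$ by a single partial complementation, and it suffices to reach every disconnected $F_i$ from one of the four generators.

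Next I would sort $F_1,\dots,F_{24}$ by family membership: $F_1,\dots,F_5\in\mathcal{H}_7$, $F_6,\dots,F_{12}\in\mathcal{H}_{8A}$, $F_{13},\dots,F_{20}\in\mathcal{H}_{8B}$, and $F_{21},\dots,F_{24}\in\mathcal{H}_9$. Since partial complementation preserves the number of vertices, the three vertex counts ($7$, $8$, $9$) cannot be mixed, and Lemma \ref{closure} guarantees that each of the four families is closed under the operation. In fact its proof already records an explicit sequence of partial complementations producing $F_2,\dots,F_5$ from $F_1$, and asserts the analogous computations for the other three families. Carrying those out, that is, displaying for each family a tree of partial-complementation moves rooted at $F_6$, $F_{13}$, and $F_{21}$ respectively and reaching every disconnected member, completes the reachability argument. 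Combined with the complement observation above, this shows that all $48$ obstructions are generated from $\{F_1,F_6,F_{13},F_{21}\}$ by partial complementation; conversely, Lemma \ref{switch-pc} and the remark following it guarantee that every such partial complement is again a cograph minimal $2$-polar obstruction, so no spurious graphs are produced.

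The only genuine work, and the expected obstacle, is the bookkeeping in the previous step: checking that each family's partial-complementation ``move graph'' is connected with the chosen root, rather than merely closed. This is where the tedium of Lemma \ref{closure} resides, since a disconnected graph with several components admits many distinct partial complements and one must verify that some sequence of them links each member back to its generator. As I am entitled to assume Lemma \ref{closure} together with its proof, I would cite the explicit $\mathcal{H}_7$ computation given there, note that the three remaining families are handled identically, and conclude that the statement follows at once.
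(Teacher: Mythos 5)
Your proposal is correct and follows essentially the same route as the paper, whose entire proof is the one-line citation of Theorem \ref{char} together with Lemma \ref{closure}. The only difference is that you make explicit two points the paper leaves implicit — that the full complement is a degenerate partial complement (so the connected obstructions are reached from the disconnected ones), and that one needs reachability of every family member from its generator rather than mere closure, a fact contained in the proof of Lemma \ref{closure} (the explicit $\mathcal{H}_7$ computation and its analogues) rather than in its statement — which is a fair and slightly more careful reading of the same argument.
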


\begin{proof}
The result follows directly from Theorem
\ref{char} and Lemma \ref{closure}.
\end{proof}

The existence of the partial complementation operation
substantially reduces the number of minimal obstructions
we need to consider in order to characterize $2$-polar
cographs.   It would be great to find natural operations
preserving $k$-polarity for values of $k$ greater than
$2$.

\section{Conclusions}\label{sec:Conclusions}

We present a complete list (up to complementation)
of cograph minimal $2$-polar obstructions.   As
mentioned in the introduction, it is interesting to have
this list for at least two reasons.   First, now we have
a list of {\em no-certificates} in the case we would like
to obtain a certifying algorithm for recognition of
$2$-polar cographs.   Second, now the complete
list of cograph minimal obstructions are known for
$(1,1)$-polarity, $(2,1)$-polarity, $(1,2)$-polarity, and
$(2,2)$-polarity.   From here, some observations can
be made regarding the structure of cograph
minimal $(s,k)$-polar obstructions, e.g., it is often
the case that adding disjoint copies of $K_1$ or
$K_2$, or adding universal vertices in some
components of a cograph minimal $(s,t)$-polar
obstruction, we obtain a ``higher order'' minimal
obtruction.   In fact, we were able to generalize
each of our $24$ disconnected cograph minimal
$2$-polar obstruction to a cograph minimal
$k$-polar obstruction for any positive integer $k$.
This results in $24$ families of graphs, each of
which has as members precisely a cograph
minimal $k$-polar obstruction for every $k \ge 2$.
Although even for $k = 3$ this list fails to produce
all the cograph minimal $k$-polar obstructions,
we give it here because we think it is interesting
to look at how these families grow.

\begin{lemma}
For every positive integer $k \ge 2$, the corresponding
element of each of the following families is a cograph
minimal $k$-polar obstruction.
\begin{itemize}
	\item $\mathcal{F}_1 = \{ K_1 + (k+1) K_2 \colon\ k \ge 2 \}$.
	
	\item $\mathcal{F}_2 = \{ C_4 + P_3 + (k-2) K_2 \colon\ k \ge 2 \}$.
	
	\item $\mathcal{F}_3 = \{ F_3 + (k-2) K_2 \colon\ k \ge 2 \}$.
	
	\item $\mathcal{F}_4 = \{ \overline{2 K_2 + (k-1) K_1} + k K_1 \colon\ k \ge 2 \}$.
	
	\item $\mathcal{F}_5 = \{ \overline{(k+1) K_2} +(k-1) K_1 \colon\ k \ge 2 \}$.
	
	\item $\mathcal{F}_6 = \{ 2 P_3 + (k-1) K_2 \colon\ k \ge 2 \}$.
	
	\item $\mathcal{F}_7 = \{ \overline{P_3 + K_2} + (k-1) K_2 + K_1 \colon\ k \ge 2 \}$.
	\item $\mathcal{F}_8 = \{ \overline{2 P_3} + k K_1 \colon\ k \ge 2 \}$.
	
	\item $\mathcal{F}_9 = \{ ((P_3 + K_2) \oplus K_1) + k K_1 \colon\ k \ge 2 \}$.

	\item $\mathcal{F}_{10} = \{ ((\overline{K_2 + (k-1) K_1} + K_2) \oplus \overline{K_2}) + (k-1) K_1 \colon\ k \ge 2 \}$.
	
	\item $\mathcal{F}_{11} = \{ (2 P_3 \oplus K_1) + (k-1) K_1 \colon\ k \ge 2 \}$.
	
	\item $\mathcal{F}_{12} = \{ (K_1 \oplus (K_1 + (2K_1 \oplus (K_2 + K_1)))) + (k-1) K_1 \colon\ k \ge 2 \}$.
	
	\item $\mathcal{F}_{13} = \{ C_4 + (k-1) K_2 + 2 K_ 1\colon\ k \ge 2 \}$.
	
	\item $\mathcal{F}_{14} = \{ \overline{ K_2 + k K_1} + k K_2 \colon\ k \ge 2 \}$.
	
	\item $\mathcal{F}_{15} = \{ (2 K_k \oplus \overline{K_2}) + (k-1) K_2 \colon\ k \ge 2 \}$.
	
	\item $\mathcal{F}_{16} = \{ (2 K_2 \oplus K_1) + P_3 + (k-2) K_ 2\colon\ k \ge 2 \}$.
	\item $\mathcal{F}_{17} = \{ (2 K_2 \oplus K_2) + k K_1 \colon\ k \ge 2 \}$.
	
	\item $\mathcal{F}_{18} = \{ (2 K_k \oplus P_3) + (k-1) K_1 \colon\ k \ge 2 \}$.
	
	\item $\mathcal{F}_{19} = \{ ((K_2 + \overline{K_2}) \oplus \overline{K_2}) + \overline{K_2} + (k-2) K_2 \colon\ k \ge 2 \}$.
	
	\item $\mathcal{F}_{20} = \{ F_{20} + (k-2) K_1 \colon\ k \ge 2 \}$.
	
	\item $\mathcal{F}_{21} = \{ K_{k+1,k+1} + (k+1) K_1 \colon\ k \ge 2 \}$.
	
	\item $\mathcal{F}_{22} = \{ (k+1) K_{k+1} \colon\ k \ge 2 \}$.
	
	\item $\mathcal{F}_{23} = \{ (2 K_{k+1} \oplus K_1) + (k-1) K_2 \colon\ k \ge 2 \}$.
	
	\item $\mathcal{F}_{24} = \{  (2 K_{k+1} \oplus K_2) + (k-1) K_1\colon\ k \ge 2 \}$.	 
\end{itemize}
\end{lemma}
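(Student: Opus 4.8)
The plan is to verify, for each of the $24$ families, the three defining properties of a cograph minimal $k$-polar obstruction: that the graph is a cograph, that it is not $k$-polar, and that every one-vertex deletion is $k$-polar. The first property is uniform and essentially free. Every listed graph is built from the atoms $K_1$, $K_2$, $P_3$, $K_{k+1}$, $K_{k+1,k+1}$ (together with the fixed graphs $F_3$ and $F_{20}$, themselves already known to be cographs) by the operations of disjoint union $+$, join $\oplus$, and complementation $\overline{\,\cdot\,}$. Since each atom is $P_4$-free and the class of cographs is closed under $+$, $\oplus$ and $\overline{\,\cdot\,}$, every family member is a cograph, and I would dispose of this with one sentence per construction.

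For the non-$k$-polarity I would first note that three families are already settled by the preliminary section: $\mathcal{F}_1 = K_1 + (k+1)K_2$ and $\mathcal{F}_{21} = K_{k+1,k+1} + (k+1)K_1$ are exactly the $\ell = 1$ and $\ell = k+1$ instances of Lemma \ref{k+2comp}, while $\mathcal{F}_{22} = (k+1)K_{k+1}$ is the extremal obstruction produced in Lemma \ref{kp-genres}(5). For the remaining families I would run the standard covering argument: in any $k$-polar partition $(A,B)$ in which $A$ has at least two nonempty parts, $A$ induces a connected complete multipartite graph and hence lies inside a single component, so the other components together with the residue of the $A$-component must be covered by the at most $k$ cliques of $B$. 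A count of components (or of the cliques forced by the $P_3$'s, the large cliques, and the co-edges $\overline{K_2}$) then exceeds $k$; for the families containing a join with $K_1$ or $\overline{K_2}$ I would separately track where the apex or co-edge vertices are allowed to sit. The case where $A$ is a single stable set is handled by the same count, after observing that $B$ would then be forced to absorb too many disjoint cliques.

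For minimality I would exhibit, for each family and each orbit of $\mathrm{Aut}(H)$, an explicit $k$-polar partition of $H-v$. The witnessing partitions are assembled from a few recurring moves: deleting an isolated vertex removes one component and frees a clique of $B$; deleting an endpoint of a $K_2$ turns it into a $K_1$ that is absorbed into a stable part of $A$; deleting a vertex of a large clique $K_{k+1}$ or of a side of $K_{k+1,k+1}$ lowers the part or clique demand by one; and deleting an apex of a join collapses the join into a disjoint union. By symmetry only a handful of representative vertices per family need to be treated, so each family reduces to a short list of explicit partitions.

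The main obstacle is the absence of a single uniform induction on $k$. For the families that grow by adjoining a $K_2$ (for instance $\mathcal{F}_2,\mathcal{F}_3,\mathcal{F}_6,\mathcal{F}_7,\mathcal{F}_{13}$) or a $K_1$ (for instance $\mathcal{F}_8,\mathcal{F}_9,\mathcal{F}_{11},\mathcal{F}_{17},\mathcal{F}_{20}$) one can hope for a \emph{growth lemma} asserting that the corresponding vertex-addition carries a minimal $k$-polar obstruction to a minimal $(k+1)$-polar one, letting the $k=2$ base cases $F_1,\dots,F_{24}$ bootstrap the entire family. But families such as $\mathcal{F}_{21},\mathcal{F}_{22},\mathcal{F}_{23},\mathcal{F}_{24}$, whose internal cliques or complete-bipartite parts scale with $k$, are generated by no fixed vertex-addition and so must be treated directly. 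Isolating the right few growth lemmas, correctly matching each family to the operation that builds it, and keeping the minimality verifications from ballooning into two dozen independent case analyses is where the real effort lies.
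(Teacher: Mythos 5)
The first thing you should know is that the paper contains no proof of this lemma at all: it is stated in the Conclusions as an announcement of results (``we were able to generalize each of our $24$ disconnected cograph minimal $2$-polar obstructions\dots''), with the supporting analysis explicitly deferred to ``a work in progress.'' So there is no proof of record to compare yours against, and your proposal must stand on its own. Its skeleton is the natural one and almost certainly what the authors intend: cographness is free because every listed graph is assembled from cographs by $+$, $\oplus$ and complementation; non-$k$-polarity by a component/clique count; minimality by exhibiting a $k$-polar partition of $H-v$ for one representative of each vertex orbit. Your three anchors are also correctly identified: $\mathcal{F}_1$ and $\mathcal{F}_{21}$ are the $\ell=1$ and $\ell=k+1$ instances of Lemma \ref{k+2comp}, and $\mathcal{F}_{22}$ is the extremal obstruction $(k+1)K_{k+1}$ arising in Lemma \ref{kp-genres}. (Even here a small caveat: Lemma \ref{k+2comp} is a structure/uniqueness statement whose proof assumes the obstruction exists; the existence direction is only asserted, via ``it is easy to verify,'' for $\ell=1$, so the anchors themselves still owe a short verification.)

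The genuine gap is that what you have written is a programme, not a proof. For the remaining twenty-one families you produce neither the non-polarity count nor a single witnessing partition, and these verifications are delicate, chiefly because the clique side $B$ must induce a disjoint union of cliques \emph{with no edges between them}. Take $\mathcal{F}_{15} = (2K_k \oplus \overline{K_2}) + (k-1)K_2$, with $K_k$-copies $X,Y$ and co-edge $\{u,v\}$: the ``obvious'' cliques $X\cup\{u\}$ and $Y\cup\{v\}$ are forbidden in $B$ because $u$ is joined to $Y$ and $v$ to $X$; a correct partition of $H-v'$ ($v'$ an endpoint of a pendant $K_2$) instead puts $u$, $v$, the newly isolated vertex, and one endpoint of each surviving $K_2$ into a single stable part, and uses the $k$ pairwise nonadjacent cliques $X$, $Y$ and the leftover endpoints. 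Nothing in your list of ``recurring moves'' produces this without family-specific reasoning. Moreover, the growth lemmas you hope would mechanize an induction on $k$ cannot exist in family-independent form: $F_5 = \overline{3K_2}+K_1$ is a minimal $2$-polar obstruction, yet $F_5 + K_2$ is $3$-polar (take $K_{2,2,2}$ as the multipartite side and the cliques $K_1$, $K_2$), and indeed $\mathcal{F}_5$ grows by a completely different operation. So any correct growth lemma must carry hypotheses that encode the structure of the particular family, and proving those is the same work as the twenty-four direct verifications you were hoping to avoid --- which is exactly the work that remains undone, both in your proposal and in the paper itself.
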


In a work in progress, we analyze the structure of
disconnected cograph minimal $k$-polar obstructions
for any positive integer $k$.   As one would expect,
the number of cograph minimal $k$-polar obstructions
grows fast in terms of $k$, so it is increasingly difficult
to provide complete lists of minimal obstructions.
Nonetheless, it looks possible to describe a few families
of minimal obstructions that would completely classify
all the cograph minimal $k$-polar obstructions; this is
our next step.


\begin{thebibliography}{15}
\bibitem{bondy2008}
	J.A.~Bondy and U.S.R~Murty,
	Graph Theory,
	Springer, Berlin, 2008.

\bibitem{bravo}
	R.S.F.~Bravo, L.T.~Nogueira, F.~Protti and C.~Vianna,
	Minimal obstructions of $(2,1)$-cographs with external restrictions,
	in: Annals of I ETC - Encontro de Teoria da Computa\c c\~ao
	(CSBC 2016) (2016) Porto Alegre, \url{http://www.pucrs.br/edipucrs/},
	ISSN 2175-2761 (In Portuguese).

\bibitem{chernyakDM62}
	Z.A.~Chernyak and A.A.~Chernyak,
	About recognizing $(\alpha,\beta)$-classes of polar graphs,
	Discrete Math. 62 (1986) 133--138.

\bibitem{damaschkeJGT14}
	P.~Damaschke,
	Induced subgraphs and well-quasi-ordering,
	Journal of Graph Theory 14(4) (1990) 427--435.

\bibitem{ekimDAM156}
	T.~Ekim, N.V.R.~Mahadev and D.~de Werra,
	Polar cographs,
	Discrete Applied Mathematics 156 (2008) 1652--1660.

\bibitem{farrugiaEJC11}
	A.~Farrugia,
	Vertex-partitioning into fixed additive induced-hereditary 
	properties is $NP$-hard,
	Electron. J. Combin. 11 (2004) \#R46.

\bibitem{federSIAMJDM16}
	T.~Feder, P.~Hell, S.~Klein and R.~Motwani,
	List Partitions,
	SIAM J. Discrete Math. 16(3) (2003) 449--478.

\bibitem{feder2006}
	T.~Feder, P.~Hell and W.~Hochst\"attler,
	Generalized Colourings (Matrix Partitions) of Cographs,
	in: Graph Theory in Paris, Birkhauser, 2006, 149--167.

\bibitem{foldesSECGTC}
	S.~Foldes and P.L~Hammer,
	Split graphs,
	in: Proc. 8th Sout-Eastern Conf. on Combinatorics, Graph
	Theory and Computing, 1977, 311--315.

\bibitem{survey}
	P.~Hell,
	Graph partitions with prescribed patterns,
	European Journal of Combinatorics 35 (2014) 335--353.

\bibitem{leTCS528}
	V.B.~Le, R.~Nevries,
	Complexity and algorithms for recognizing polar and monopolar graphs,
	Theoretical Computer Science 528 (2014) 1--11.
\end{thebibliography}
\end{document}